\renewcommand{\Re}{\mathop{\rm Re}\nolimits}
\renewcommand{\Im}{\mathop{\rm Im}\nolimits}
\newtheorem{theorem}{Theorem}
\newtheorem{proposition}{Proposition}
\newtheorem{remark}{Remark}
\newtheorem{cor}{Corollary}
\begin{document}
	
\title{Curved  wedges in the long-time asymptotics  for the integrable  nonlocal nonlinear Schr\"odinger equation}

\author{Ya. Rybalko$^{\dag}$ and D. Shepelsky$^{\dag,\ddag}$\\
\small\em {}$^\dag$ B.Verkin Institute for Low Temperature Physics and Engineering\\
\small\em {} of the National Academy of Sciences of Ukraine\\
\small\em {}$^\ddag$ V.Karazin Kharkiv National University}
\date{}
%\author{Yan Rybalko\affil{1} and Dmitry Shepelsky\affil{1,2}}
% Abbreviated author name for running headers

\maketitle

\begin{abstract}
	We consider the Cauchy problem for 
	the integrable nonlocal nonlinear Schr\"odinger (NNLS) equation
	$
	iq_{t}(x,t)+q_{xx}(x,t)+2 q^{2}(x,t)\bar{q}(-x,t)=0, \, x\in\mathbb{R},\,t>0,
	$
	with a step-like boundary values:
	$q(x,t)\to 0$ as $x\to-\infty$ and  $q(x,t)\to A$ as $x\to\infty$ for all 
	$t\geq0$, where $A>0$ is a  constant. 
In a recent paper, we presented the 
	  long-time asymptotics of the solution $q(x,t)$ of this problem 
	along the rays $x/t=C\ne 0$, where $C$ is a constant.
	In the present paper, we extend  the asymptotics 
	into a region that is asymptotically 
	closer to the ray $x=0$ than any of these rays.
	We specify a one-parameter family of wedges in the $x,t$-plane, with 
	curved boundaries, characterized by qualitatively different asymptotic behavior 
	of $q(x,t)$, and present the main asymptotic terms for each wedge.
	Particularly, for wedges within $x<0$, we show that  the solution decays as 
	$t^{p}\sqrt{\ln t}$ with  $p<0$  depending on the wedge. For wedges within $x>0$,
	we show that the asymptotics has an oscillatory nature, with the phase functions
	specific for each wedge and depending on a slow variable parametrizing the wedges.
	\end{abstract}

%\maketitle
		
\section{Introduction}
We consider the following ``step-like problem'' for the focusing integrable nonlocal nonlinear Schr\"odinger (NNLS) equation ($\bar{q}$ denotes the complex conjugate of $q$)
\begin{subequations}\label{1}
\begin{align}
\label{1-a}
& iq_{t}(x,t)+q_{xx}(x,t)+2q^{2}(x,t)\bar{q}(-x,t)=0, & & x\in\mathbb{R},\,t>0,  \\
\label{1-b}
& q(x,0)=q_0(x), & &  x\in\mathbb{R}, 
\end{align}
\end{subequations}
with the boundary conditions to be satisfied for all $t\geq0$:
\begin{subequations}\label{2.0}
\begin{align}
\label{2-a}
& q(x,t)=o(1), & x\rightarrow-\infty,\\
\label{2-b}
& q(x,t)=A+o(1), & x\rightarrow+\infty.
\end{align}
\end{subequations}
Particularly, the initial data $q_0(x)$ is assumed to satisfy these boundary conditions as well:
\[
q_0(x)\to  \begin{cases}
0, & x\to -\infty,\\
 A, & x\to +\infty.
\end{cases}
\]

The NNLS equation (\ref{1-a}) is a relatively new integrable model introduced by M. Ablowitz and Z. Musslimani \cite{AMP} as a reduction 
(with $r(x,t)=\bar{q}(-x,t))$ of 
the coupled Schr\"odinger equations (also known as Ablowitz-Kaup-Newell-Segur (AKNS) system \cite{AKNS})
\begin{subequations}
\label{cs}
\begin{align}
\label{cs-a}
iq_t+q_{xx}+2q^2r=0,\\
\label{cs-b}
-ir_t+r_{xx}+2r^2q=0.
\end{align}
\end{subequations}
 The $2\times2$ Lax pair equations for the 
NNLS equation are as follows:
\begin{subequations}
\begin{align}
\label{LP}
&\Phi_{x}+ik\sigma_{3}\Phi= U(x,t)\Phi,\\
&\Phi_{t}+2ik^{2}\sigma_{3}\Phi= V(x,t,k)\Phi,
\end{align}
\end{subequations}
where $\sigma_3=\left(\begin{smallmatrix} 1& 0\\ 0 & -1\end{smallmatrix}\right)$, 
\[
\begin{split}
U(x,t)= &
\begin{pmatrix}
0& q(x,t)\\
-\bar{q}(-x,t)& 0
\end{pmatrix}, \\
V(x,t,k)=& \begin{pmatrix}
iq(x,t)\bar{q}(-x,t) & 2kq(x,t)+iq_{x}(x,t)\\
-2k\bar{q}(-x,t)+i(\bar{q}(-x,t))_{x}& -iq(x,t)\bar{q}(-x,t)
\end{pmatrix}.
\end{split}
\]

Notice that  (\ref{1-a}) can be viewed as the Schr\"odinger equation 
$iq_t(x,t)+q_{xx}(x,t)+V(x,t)q(x,t)=0$ with a potential $V(x,t)=2\bar{q}(-x,t)q(x,t)$ that satisfies the \textit{PT} symmetric \cite{BB} condition $V(x,t)=\overline{V(-x,t)}$. Therefore,  the NNLS equation is (i) integrable and (ii) a \textit{PT} symmetric system, which is of interest from both the physical and mathematical perspective.
Indeed, it is gauge equivalent to the unconventional system of coupled Landau-Lifshitz (CLL) equations in magnetics \cite{GA}. On the other hand, the NNLS equation is related to the quasi-monochromatic asymptotic reductions of the classical integrable equations: the Korteweg–de Vries (KdV) equation and Klein-Gordon equation \cite{AMJPA}.

Moreover, the NNLS equation is an abundant source of  new nonlinear effects, particularly, it supports both bright and dark soliton solutions simultaneously \cite{SMMC} (see also e.g. \cite{LXM}, \cite{XCLM} and references therein), which is in a sharp contrast with the conventional (focusing) nonlinear Schr\"odinger (NLS) equation ($r(x,t)=\bar{q}(x,t)$ in (\ref{cs})) that supports only bright soliton solutions. Also it has soliton solutions with both zero and nonzero backgrounds \cite{AMFL}, which, in general, blow up in a finite time. Additionally, (\ref{1-a}) supports rouge waves, algebro-geometric solutions and breather solutions (see \cite{YY, MS, GS, S} and references therein) with a number of interesting and distinctive properties. Furthermore, the NNLS equation is connected to the Alice-Bob systems \cite{Lou} describing various physical phenomena which can happen in two (or more)
places linked to each other. Finally, we notice that some unusual features are manifested in  the long-time asymptotics; particularly, the soliton-free solution of the initial value problem for (\ref{1-a}) with zero background  decays to zero with different  power rates along  different rays $\frac{x}{t}=const$ \cite{RS}. %(this is in a sharp contrast with the classical Zakharov-Manakov formula).

In \cite{RS2} we presented the long-time asymptotics of the solution of  problem (\ref{1}), (\ref{2.0}) along the rays $\xi\equiv\frac{x}{4t}=const\ne 0$. 
Particularly, we showed that in the case of the initial data that are close, in a certain sense,
 to the ``pure step data''
\begin{equation}
\label{step-ini}
q_{0A}(x) = \begin{cases} 0, & x<0, \\
A, & x>0,
\end{cases}
\end{equation}
the solution (asympotically) behaves qualitatively different in two
regions: $x<0$ (where the solution decay to $0$) and 
 $x>0$ (where it approaches a ``modulated'' constant).
Notice that in the case of the conventional focusing NLS equation in the similar
setting (with step-like initial data), 
the asymptotic picture is as follows:
(i) there are two sectors (in the $(x,t)$ half-plane), $\xi<C_1$ and $\xi>C_2$ with certain constants $C_1$ and $C_2$ 
with $C_1<C_2$,
 where the asymptotic behavior of the solution
is directly related to the (different) backgrounds in the initial data \cite{BKS};
(ii)  in the  \emph{transition sector}
$C_1<\xi<C_2$, the asymptotics is described in terms of elliptic functions with modulated
(by $\xi$) parameters.
Such
 behavior is intrinsic for other local integrable equations (KdV, defocusing NLS, Toda, etc.),
see, e.g., 
 \cite{EGKT, BiM, J, EMT}. With this respect, the NNLS equation
is qualitatively different: there is no room for a sector (with straight boundaries), in which the asymptotics for $x<0$ and $x>0$ could match.

Transition regions, being a rich source of  nonlinear effects,
have been a subject of interest since 1970s \cite{GP} for various problems for integrable systems. Particularly, in \cite{Kkdv} and \cite{KK}, the so-called
asymptotic solitons were  observed  for step-like problems for the KdV and the focusing NLS equations respectively (see also \cite{KMnls}). 
Notice that for problems with \emph{decaying boundary conditions}, 
transition regions with curved boundaries
(in the $x,t$ plane) can be specified as well. For instance, the Painlev\'e-II
transcendents describe the principal part of the asymptotics in a 
transition region for the mKdV equation \cite{DZ}
 (see also \cite{BM, CL} for more results on the transition zones for the mKdV equation). 
For the KdV equation, another transition zone 
(called the collisionless shock region \cite{AS}) was fully specified in 
\cite{DVZ}. Transition regions for other integrable equations have also been 
reported;  to name but a few, see, e.g.,  \cite{BIS, BS} for the Camassa--Holm equation and 
\cite{K} for the Toda lattice.

Integrable equations in the small dispersion (semi-classical) limit 
\cite{Du,BT,KMM,BuM}
are deeply related to the large time limit \cite{G16}.   
For the KdV equation, there is  a cusp-shaped (Whitham) region \cite{LL, Ven85},
characterized  by
 rapid modulated elliptic oscillations
whereas 
the description of the small dispersion behavior near its boundaries involves 
the Painlev\'e-I and Painlev\'e-II transcendents, see
\cite{DVZ1,  CG, GK} and references therein.

In the present paper we address the problem of extending the asymptotics obtained in \cite{RS2} into the zone connecting the sectors 
$\xi<-\varepsilon$ and $\xi>\varepsilon$ for any $\varepsilon>0$.
 We present a  family of curved transition zones parametrized by $\alpha\in(0,1)$ (see (\ref{scurve}) below), each characterized by qualitatively different 
behavior of the solution of problem (\ref{1}), (\ref{2.0}) and having its proper 
modulation parameter. Particularly, the decaying regimes for  $x<0$
have the rate of decay of order $t^{p(\alpha)}\sqrt{\ln t}$, where $p(\alpha)<0$ and $p(\alpha)$ can be written explicitly (see Theorem \ref{th1} below).
Moreover, the oscillating phases in these regimes involve 
a number of growing terms, of order 
 $t^{\frac{\alpha}{2-\alpha}}$, $\ln^2 t$, $\ln t\cdot\ln\ln t$, $\ln t$,
 and $\ln\ln t$.
For $x>0$, 
the asymptotics turns to be non-decaying and oscillatory,
 with oscillations 
characterized by  two large parameters, $\ln^2 t$ and $\ln t$. 
To the best of our knowledge, 
such picture of asymptotic regions for integrable systems has never been reported  before.

Our main tool is the adaptation of  the nonlinear steepest decent method (Deift and Zhou method), which was proposed by Deift and Zhou \cite{DZ} and relies on the previous works by Manakov \cite{M} and Its \cite{I1} (see \cite{DIZ} for the history of the problem). 
An important step of the nonlinear steepest decent method (as well as of the classical steepest decent method) consists in choosing  an appropriate 
``slow variable'' while keeping a fast variable in the phase factors
of the Riemann--Hilbert factorization problem 
associated with a particular problem for the nonlinear equation in question. 
If the slow variable $\xi=\frac{x}{4t}$ \cite{ZM} is appropriate, then 
it is natural to expect the  asymptotics to be  
qualitatively different in sectors (in the $x,t$-plane)  with straight boundaries.
On the other hand, the asymptotics in the collisionless shock region
 for the KdV equation \cite{DVZ} involves, as a slow variable,
 $s=-(12)^{3/2}\frac{\sqrt{t}\ln\frac{-x}{12t}}{(-x)^{3/2}}$  ($x<0$). 

In this paper, we introduce the slow variable $s=\frac{x^{2-\alpha}}{4t}$
in curved wedges parametrized by  $\alpha\in(0,1)$ 
(see Figure \ref{regions} below).  
This allows us to cope, to some extent, with the problem
of the asymptotic analysis of the Riemann--Hilbert problem
in the situation where the stationary phase point $k=-\xi$ merges with 
the singularity point $k = 0$ of  the associated spectral functions. 
In this way, we extend the region $\frac{x}{4t}=const\neq0$ where the large time asymptotics
of the solution of  problem (\ref{1}), (\ref{2.0}) was considered in \cite{RS2},
 to wedges with curved
boundaries: $\frac{x^2}{4s} >|t|> \frac{|x|}{4\xi}$ for any $s>0$ and $\xi>0$.

The paper is organized as follows. In Section \ref{IS} we briefly recall 
the Riemann-Hilbert problem formalism of 
the Inverse Scattering Transform (IST) method for problem (\ref{1}), (\ref{2.0}). The asymptotic analysis of the basic Riemann--Hilbert problem is presented in
 Section \ref{secas}, where the main result  (Theorem \ref{th1}) is proved. Finally, in Section \ref{merge} we show that the asymptotics in the sectors with 
 straight line boundaries obtained in \cite{RS2} are consistent with the limiting values 
(as $\alpha\to 1$) of the asymptotics in the curved sectors obtained in the present paper.

\section{Inverse scattering and the basic RH problem}\label{IS}
The IST method for the step-like problem (\ref{1}), (\ref{2.0}) based on the
Riemann-Hilbert problem formalism is presented   in \cite{RS2}.
In this section, we briefly recall the definition of the spectral functions, their main properties, and the formulation of the basic Riemann--Hilbert problem; the latter will be  a starting point of the asymptotic analysis presented in Section \ref{secas}. 

The $2\times2$ scattering matrix $S(k)$ ($\det S(k)=1$) is defined as follows:
\begin{equation}
\label{scmatr}
\Psi_1(x,t,k)=\Psi_2(x,t,k)e^{-(ikx+2ik^2t)\sigma_3}S(k)e^{(ikx+2ik^2t)\sigma_3},\quad k\in\mathbb{R}\setminus\{0\},
\end{equation}
where the $\Psi_j$, $j=1,2$ are related to the  Jost solutions of the Lax pair 
equations (\ref{LP}); they can be defined as 
the solutions of the  Volterra integral equations:
\begin{subequations}\label{psi}
\begin{align}
\label{psi-1}
&\Psi_1(x,t,k)=N_-(k)+\int^x_{-\infty}G_-(x,y,t,k)\left(U(y,t)-U_-\right)
\Psi_1(y,t,k)e^{ik(x-y)\sigma_3}\,dy,\\
\label{psi-2}
&\Psi_2(x,t,k)=N_+(k)+\int^x_{\infty}G_+(x,y,t,k)\left(U(y,t)-U_+\right)
\Psi_2(y,t,k)e^{ik(x-y)\sigma_3}\,dy,
\end{align}
\end{subequations}
where $N_+(k)=
\left(
\begin{smallmatrix}
1 & \frac{A}{2ik}\\
0 & 1
\end{smallmatrix}
\right)
$,  
$N_-(k)=
\left(
\begin{smallmatrix}
1 & 0\\
\frac{A}{2ik} & 1
\end{smallmatrix}
\right)
$,
$G_{\pm}(x,y,t,k)=\Phi_{\pm}(x,t,k)[\Phi_{\pm}(y,t,k)]^{-1}$ with
$\Phi_{\pm}(x,t,k)=N_{\pm}(k)e^{-(ikx+2ik^2t)\sigma_3}$,
$
U_+=
\left(
\begin{smallmatrix}
0 & A\\
0 & 0
\end{smallmatrix}
\right)
$
and
$
U_-=
\left(
\begin{smallmatrix}
0 & 0\\
-A & 0
\end{smallmatrix}
\right)
$.
From (\ref{scmatr}) and (\ref{psi}) 
evaluated for $t=0$ 
it follows that $S(k)=\Psi_1(0,0,k)[\Psi_2(0,0,k)]^{-1}$ is uniquely determined by the initial data $q_0(x)$ (involved in $U(x,0)$).

Taking into account the symmetry relation 
$\Lambda\overline{\Psi_1(-x,t,-\bar{k})}\Lambda^{-1}=\Psi_2(x,t,k)$, $k\in\mathbb{R}\setminus\{0\}$, where 
$\Lambda=\bigl(\begin{smallmatrix}0& 1\\1 & 0\end{smallmatrix}\bigl)$, the scattering matrix $S(k)$ can be written in  terms of three scalar functions 
$b(k)$, $a_1(k)$, and $a_2(k)$:
\begin{equation}\label{sm}
S(k)=
\begin{pmatrix}
a_1(k)& -\overline{b(-k)}\\
b(k)& a_2(k)
\end{pmatrix},\quad k\in\mathbb{R}\setminus\{0\}.
\end{equation}

\begin{proposition}\cite{RS2}\label{properties}
The spectral functions $a_j(k)$, j=1,2, and $b(k)$ have the following properties:
\begin{enumerate}
\item 
$a_{1}(k)$ is analytic in  $k\in\mathbb{C}^{+}$
and continuous in 
$\overline{\mathbb{C}^{+}}\setminus\{0\}$;
$a_{2}(k)$ is analytic in $k\in\mathbb{C}^{-}$
and continuous in $\overline{\mathbb{C}^{-}}$. 
Here $\overline{\mathbb{C}^{\pm}}=\{k\  |\  \pm\Im k\ge 0\}$.
\item
$a_{1}(k)=1+{O}\left(\frac{1}{k}\right)$
as $k\rightarrow\infty$ for  $k\in\overline{\mathbb{C}^+}$,
$a_{2}(k)=1+{O}\left(\frac{1}{k}\right)$
as $k\rightarrow\infty$ for  $k\in\overline{\mathbb{C}^-}$, and 
$b(k)={O}\left(\frac{1}{k}\right)$ as $k\rightarrow\infty$ for $k\in\mathbb{R}$.
\item
$\overline{a_{1}(-\bar{k})}=a_1(k)$,  
$k\in\overline{\mathbb{C}^{+}}\setminus\{0\}$; \qquad
$\overline{a_{2}(-\bar{k})}=a_2(k)$, $k\in\overline{\mathbb{C}^{-}}$.
\item
$a_{1}(k)a_{2}(k)+b(k)\overline{b(-
\bar{k})}=1$, $k\in{\mathbb R}\setminus\{0\}$.
\item 
$a_1(k)=\frac{A^2a_2(0)}{4k^2}+O(\frac{1}{k})$ as $k\to0$, $k\in \overline{\mathbb{C}^{+}}$ and $b(k)=\frac{Aa_2(0)}{2ik}+O(1)$ as $k\to0$, $k\in\mathbb{R}$.
\end{enumerate}
\end{proposition}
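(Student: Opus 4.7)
The plan is to derive all five properties from the Volterra equations (\ref{psi}) together with the symmetries of the potential $U(x,t)$ (its \emph{PT} structure under $x\to-x$ and complex conjugation). This is the standard Jost-function/scattering setup adapted to the nonlocal situation; items 1--4 are conceptually routine, while item 5, the singular behaviour at $k=0$, is where the real work lies.

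For item 1 I would write $G_{\pm}(x,y,t,k)=N_{\pm}(k)e^{-ik(x-y)\sigma_{3}}N_{\pm}^{-1}(k)$ and note that the two columns of each $\Psi_{j}$ decouple with respect to $k$: one column carries $e^{ik(x-y)}$ and the other $e^{-ik(x-y)}$. Combined with the signs of $x-y$ forced by the integration range in (\ref{psi-1})--(\ref{psi-2}), this shows that the first column of $\Psi_{1}$ and the second column of $\Psi_{2}$ extend analytically to $\mathbb{C}^{+}$, while the remaining columns extend to $\mathbb{C}^{-}$. The Neumann iteration of the Volterra equations is uniformly convergent in the appropriate closed half-plane minus $\{0\}$, the latter being the only singularity of $N_{\pm}(k)$. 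Since from (\ref{scmatr}) the spectral functions $a_{1}(k)$ and $a_{2}(k)$ are built from the ``right'' analytic columns of $\Psi_{1}$ and $\Psi_{2}$ (concretely, via determinants like $\det[\Psi_{1}^{(1)},\Psi_{2}^{(2)}]$), the claimed analyticity and continuity follow. Item 2 is then obtained by inserting $N_{\pm}(k)\to I$ into a once-iterated Volterra equation and integrating the oscillatory correction by parts to extract the factor $1/k$.

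For item 3 I would use the symmetry $\Lambda\overline{\Psi_{1}(-x,t,-\bar k)}\Lambda^{-1}=\Psi_{2}(x,t,k)$ stated in the text just before (\ref{sm}). This identity itself is proved by checking that after the substitution $x\to-x$, $k\to-\bar k$, complex conjugation, and conjugation by $\Lambda$, the right-hand side of (\ref{psi-1}) coincides with the right-hand side of (\ref{psi-2}); the two ingredients required are the \emph{PT} structure of $U$ and the easily verified identity $\Lambda\overline{N_{-}(-\bar k)}\Lambda^{-1}=N_{+}(k)$. Substituting this symmetry into $S(k)=\Psi_{1}(0,0,k)[\Psi_{2}(0,0,k)]^{-1}$ and reading off the entries in the parametrization (\ref{sm}) yields both symmetry relations. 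Item 4 is then immediate: since $U$ is traceless, $\det\Psi_{1}=\det\Psi_{2}=\det N_{\pm}=1$, so $\det S(k)=1$; expanding the determinant of (\ref{sm}) directly produces $a_{1}(k)a_{2}(k)+b(k)\overline{b(-k)}=1$ on $\mathbb{R}\setminus\{0\}$, which coincides with the stated relation since $\overline{b(-\bar k)}=\overline{b(-k)}$ for real $k$.

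The main obstacle is item 5. The singularities of $N_{\pm}(k)$ at $k=0$ are transmitted to both $\Psi_{j}(0,0,k)$, and one must track the cancellations that occur inside $S(k)=\Psi_{1}[\Psi_{2}]^{-1}$ in order to recover the precise leading coefficients $A^{2}a_{2}(0)/(4k^{2})$ and $Aa_{2}(0)/(2ik)$. The natural strategy is to write $N_{\pm}(k)=I+\frac{A}{2ik}E_{\pm}$ with constant nilpotent matrices $E_{\pm}$ and expand the Volterra series for $\Psi_{1}(0,0,k)$ and $\Psi_{2}(0,0,k)$ near $k=0$, keeping the leading $1/k$ contributions factor by factor. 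Forming the product $\Psi_{1}(0,0,k)[\Psi_{2}(0,0,k)]^{-1}$ and reading off its $(1,1)$ and $(2,1)$ entries gives the claimed asymptotics, with $a_{2}(0)$ appearing as the regular limit of an appropriate column of $\Psi_{2}$ at the origin. The book-keeping of cancellations between singular terms of different orders is where I would expect to spend most of the calculation time, and it is essentially the only nontrivial computation in the proof.
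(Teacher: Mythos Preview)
The paper does not prove this proposition; it is quoted from the authors' companion paper \cite{RS2}, as the citation in the proposition heading indicates, so there is no in-text argument to compare against directly. Your outline is the standard direct approach and is correct as stated.

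One remark on item~5, where you expect to spend most of the effort: the bookkeeping of cancellations in the full Neumann series can be avoided. Observe first that although $N_{\pm}(k)$ are singular at $k=0$, the kernels $G_{\pm}(x,y,0,k)=N_{\pm}(k)e^{-ik(x-y)\sigma_3}N_{\pm}^{-1}(k)$ are \emph{regular} there (the pole cancels, leaving a $\sin k(x-y)/k$ entry). Hence the only singularity in the Volterra equation for $\Psi_1^{(1)}$ comes from the inhomogeneous term $N_-^{(1)}(k)=\bigl(\begin{smallmatrix}1\\A/(2ik)\end{smallmatrix}\bigr)=\tfrac{A}{2ik}N_-^{(2)}(0)+\bigl(\begin{smallmatrix}1\\0\end{smallmatrix}\bigr)$. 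Linearity of the Volterra equation then gives $\Psi_1^{(1)}(\cdot,0,k)=\tfrac{A}{2ik}\Psi_1^{(2)}(\cdot,0,0)+O(1)$ and, symmetrically, $\Psi_2^{(2)}(\cdot,0,k)=\tfrac{A}{2ik}\Psi_2^{(1)}(\cdot,0,0)+O(1)$. Since the regular columns give $a_2(0)=\det[\Psi_2^{(1)}(0,0,0),\Psi_1^{(2)}(0,0,0)]$, inserting the two expansions into the Wronskian formulae $a_1=\det[\Psi_1^{(1)},\Psi_2^{(2)}]$ and $b=-\det[\Psi_1^{(1)},\Psi_2^{(1)}]$ immediately produces the leading coefficients $\tfrac{A^2a_2(0)}{4k^2}$ and $\tfrac{Aa_2(0)}{2ik}$. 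This is essentially what the Remark following the proposition records (the auxiliary vector $v$ there is, up to normalisation, $\Psi_1^{(2)}(\cdot,0,0)$). Your direct expansion would reach the same answer, but with considerably more algebra.
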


\begin{remark}
Notice that $a_2(0)$ can be calculated \cite{RS2} in terms of $q_0(x)$ as follows:
$a_2(0) = \frac{4(|v_2(0)|^2 - |v_1(0)|^2)}{A^2}$, where
$v_2(x)$ is the solution of the integral equation 
\[
v_2(x) = -\frac{iA}{2} -\int_{-\infty}^x
	\overline{q_0(-y)}\int_{-\infty}^y q_0(s) v_2(s)\,ds\,dy
\]
and $v_1(x)=\int_{-\infty}^x  q_0(s) v_2(s)\,ds$.
\end{remark}

In the case of pure step initial data $q_0(x)=q_{0A}(x)$, see (\ref{step-ini}), the scattering matrix (\ref{sm}) can be explicitly calculated by evaluating (\ref{scmatr})
and (\ref{psi}) at $x,t=0$:
\begin{equation}
\label{step}
S(k)=N_+^{-1}(k)N_-(k)=
\begin{pmatrix}
1+\frac{A^2}{4k^2}& -\frac{A}{2ik}\\
\frac{A}{2ik} & 1
\end{pmatrix}.
\end{equation}
In this  case, we have, particularly, that 
(a) $a_1(k)$ has one simple pure imaginary zero $k=i\frac{A}{2}$ in $\overline{\mathbb{C}^+}$ and 
(b) $a_2(k)$ has no zeros in $\overline{\mathbb{C}^-}$. Now observe that 
 these properties of the spectral functions survive 
any small $L^1$ perturbation of pure step initial data. This is clear for  $a_2(k)$;
as for $a_1(k)$, consider its values along the imaginary axis and notice that 
(i) in view of the symmetry property (see Item 3 of Proposition \ref{properties}), 
$a_1(k)$ is real-valued for $k=i\rho$, $0<\rho<\infty$,
(ii)  $a_1(i\rho)\to 1$ as $\rho\to\infty$ (see  Item 2 of Proposition \ref{properties}),
and 
(iii) $a_1(i\rho)\to -\infty$ as $\rho\to 0$ 
(see  Item 5 of Proposition \ref{properties}). 
Since in the case $q_0(x)=q_{0A}(x)$, $a_1(i\rho)$ is an increasing function 
with a single zero at $\rho=k_1=\frac{A}{2}$, it has a single simple zero on $0<\rho<\infty$ 
 for all $q_0(x)$ that are small perturbations of 
$q_{0A}(x)$  (see Figure \ref{za1}).

\begin{figure}[ht]
	%\begin{minipage}[h]{0.99\linewidth}
	\centering{\includegraphics[scale=0.4]{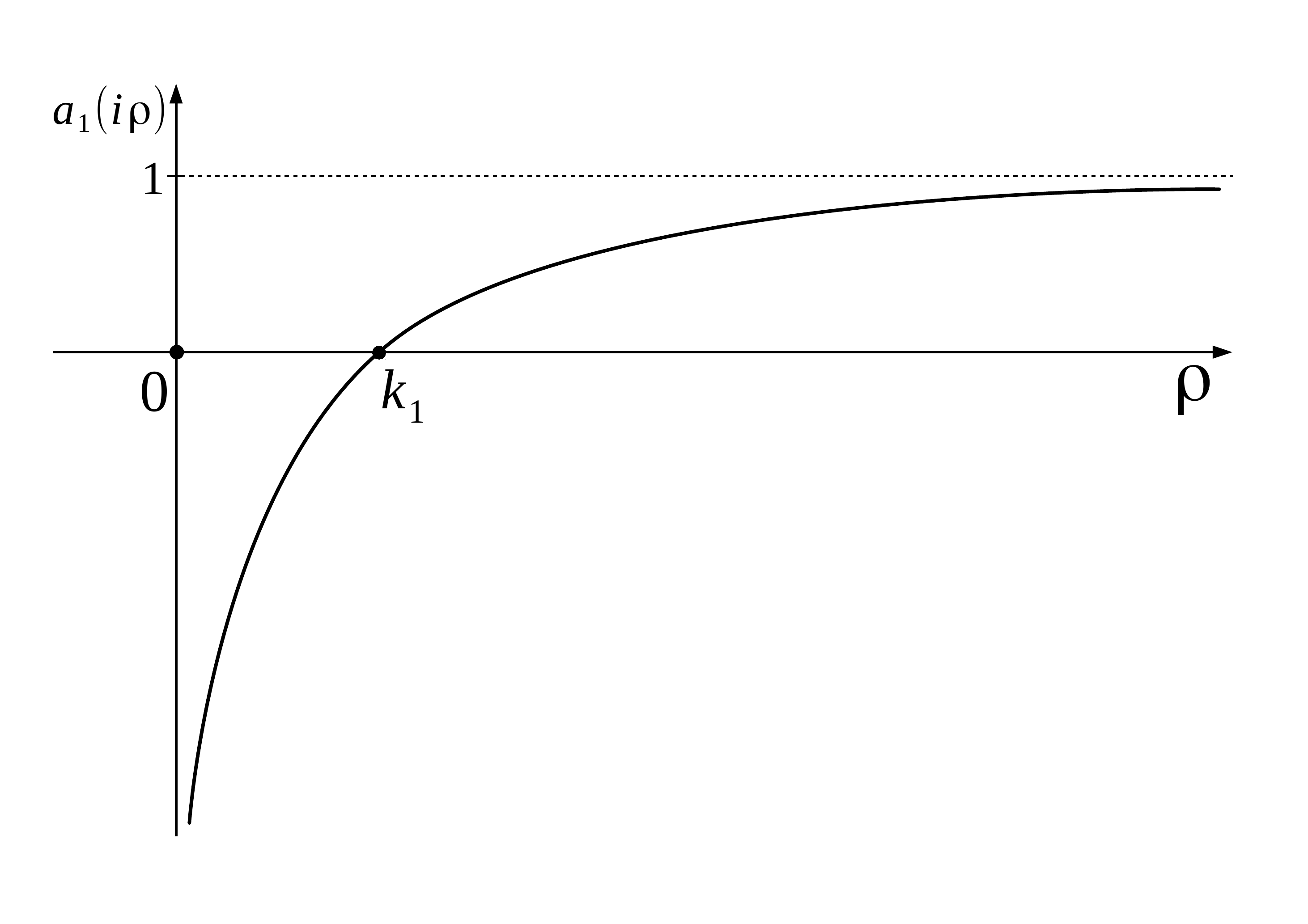}}
	\caption{Behavior of $a_1(k)$ on the imaginary axis for the initial data close to the pure step.}
	\label{za1}
	%\end{minipage}
\end{figure}

Another example of the problem with explicitly written spectral functions is
 the pure soliton solution
\begin{equation}
\label{pure-sol}
q_s(x,t)=\frac{A}{1-e^{-Ax-iA^2t+i\phi}},\quad \text{with some}\ \ \phi\in\mathbb{R},
\end{equation}
where the associated scattering matrix $S(k)$ is given by
\begin{equation}
\label{sp}
S(k)=
\begin{pmatrix}
\frac{k-i\frac{A}{2}}{k}& 0\\
0 & \frac{k}{k-i\frac{A}{2}}
\end{pmatrix}
\end{equation}
(for details, see Proposition 5 in \cite{RS2}). 
Particularly, in this case we have that 
$a_1(k)$ has one simple pure imaginary zero in $\overline{\mathbb{C}^+}$  at 
$k=i\frac{A}{2}$ and $a_2(k)$ has one simple zero in $\overline{\mathbb{C}^-}$
at $k=0$. In view of Item 5 of Proposition \ref{properties}, the latter 
property affects significantly the behavior of other spectral functions 
at $k=0$.

Motivated by the 
 discussion above (and similarly to \cite{RS2}), in what follows we consider 
two types of the Cauchy problem (\ref{1}), (\ref{2.0}) specified, in  spectral terms, by 

%Additionally, in the case of the pure-step initial data, where the scattering matrix has the form
%\begin{equation}
%\label{step}
%S(k)=[\Phi_2(0,0,k)]^{-1}\Phi_1(0,0,k)=N_+^{-1}(k)N_-(k)=
%\begin{pmatrix}
%1+\frac{A^2}{4k^2}& -\frac{A}{2ik}\\
%\frac{A}{2ik} & 1
%\end{pmatrix},
%\end{equation}
%so the spectral function $a_1(k)$ has one simple purely imaginary zero $k=i\frac{A}{2}$ in the upper complex half-plane and $a_2(k)$ has no zeros. 

\begin{description}
\item [\underline{Assumption 1}]
\item [Case I:] (covering the case of pure step initial data and all its  small $L^1$ perturbations)\\
The spectral function $a_1(k)$ has one purely imaginary simple zero in $\overline{\mathbb{C}^+}$: $a_1(ik_1)=0$ with some $k_1>0$, 
and $a_2(k)$ has no zeros in $\overline{\mathbb{C}^-}$.
\item [Case II:] (covering the soliton solution)\\
The spectral function $a_1(k)$ has one purely imaginary simple zero in $\overline{\mathbb{C}^+}$: $a_1(ik_1)=0$ with some $k_1>0$,  and $a_2(k)$ has one simple zero in $\overline{\mathbb{C}^-}$ at $k=0$ (i.e., $a_2(0)=0$ and %$\lim\limits_{\begin{smallmatrix}k\to0\\k\in\overline{\mathbb{C}^-}\end{smallmatrix}}$).
$a_{21}:=\lim\limits_{k\to0,\,k\in\overline{\mathbb{C}^-}}
\frac{a_2(k)}{k}\neq0$).
Additionally, we assume that  $a_{11}:=\lim\limits_{k\to 0,\,k\in\overline{\mathbb{C}^+}}ka_1(k)\not=0$.
\end{description}

\begin{proposition}\cite{RS2}\label{RH-representation}
Under Assumption 1, 
the solution of the initial problem (\ref{1}) with the boundary conditions (\ref{2.0}) can be represented either by
\begin{equation}\label{sol}
q(x,t)=2i\lim_{k\to\infty}kM_{12}(x,t,k),
\end{equation}
or, alternatively, by
\begin{equation}\label{sol1}
q(-x,t)=-2i\lim_{k\to\infty}k\overline{M_{21}(x,t,k)},
\end{equation}
where $M(x,t,k)$ is the solution of the Riemann--Hilbert problem,
with data determined in terms of the spectral functions associated
with the initial data $q_0(x)$:
find a $2\times2$-valued function $M(x,t,k)$, which is sectionally meromorphic with respect to $\mathbb{R}$ in the complex $k$-plane 
and satisfies the conditions:
\begin{subequations}\label{brh}
\begin{align}
\mathbf{(i)}\quad&M_+(x,t,k)=M_-(x,t,k)J(x,t,k) &\text{for a.e.}\  k\in\mathbb{R}\setminus\{0\},\\
\mathbf{(ii)}\quad&M(x,t,k)=I+O\left(\frac{1}{k}\right)&\mbox{uniformly as } k\to\infty,\\
\mathbf{(iii)}\quad&\underset{k=ik_1}{\operatorname{Res}} M^{(1)}(x,t,k)=
\frac{\gamma_1}{\dot{a}_1(ik_1)}e^{-2k_1x-4ik_1^2t}M^{(2)}(x,t,ik_1),&
\end{align}
\end{subequations}
where $M_\pm$ denotes the non-tangential limits of $M$ as $\Im k\to0$, $k\in\mathbb{C}^{\pm}$, here and below $M^{(j)}$, $j=1,2$ stands for the corresponding column of the matrix $M$ and
$$
J(x,t,k)=
\begin{pmatrix}
1+r_{1}(k)r_{2}(k)& r_{2}(k)e^{-2ikx-4ik^2t}\\
r_1(k)e^{2ikx+4ik^2t}& 1
\end{pmatrix},
$$
with $r_1(k)=\frac{b(k)}{a_1(k)}$ and $r_2(k)=\frac{\overline{b(-k)}}{a_2(k)}$,
$\gamma_1\in\mathbb C$ (with $|\gamma_1|=1$) is determined by the relation 
$\Psi_1^{(1)}(0,0,ik_1)=\gamma_1\Psi_2^{(2)}(0,0,ik_1)$,
%\begin{equation}\label{r12}
%r_1(k):=\frac{b(k)}{a_1(k)},\quad r_2(k):=\frac{\overline{b(-k)}}{a_2(k)},
%\end{equation}
and 

$\mathbf{(iv)}\quad M(x,t,k)$  has the following behavior at $k=0$: 
\begin{itemize}
	\item in Case I,
	\begin{subequations}\label{z}
	\begin{align}
	\label{+i0}
	& M(x,t,k)=
	\begin{pmatrix}
	\frac{4}{A^2a_2(0)}v_1(x,t)& -\overline{v_2}(-x,t)\\
	\frac{4}{A^2a_2(0)}v_2(x,t)& -\overline{v_1}(-x,t)
	\end{pmatrix}
	(I+O(k))
	\begin{pmatrix}
	k& 0\\
	0& \frac{1}{k}
	\end{pmatrix}, & k\rightarrow 0,\,k\in\mathbb{C}^+\\
	\label{-i0}
	& M(x,t,k)=\frac{2i}{A}
	\begin{pmatrix}
	-\overline{v_2}(-x,t)& \frac{v_1(x,t)}{a_2(0)}\\
	-\overline{v_1}(-x,t)& \frac{v_2(x,t)}{a_2(0)}
	\end{pmatrix}
	+O(k), & k\rightarrow 0,\,k\in\mathbb{C}^-,
	\end{align}
\end{subequations}
\item 
in Case II,
\begin{subequations}\label{nz}
	\begin{align}
	\label{n+i0}
	& M(x,t,k)=
	\begin{pmatrix}
	\frac{v_1(x,t)}{a_{11}}& -\overline{v_2}(-x,t)\\
	\frac{v_2(x,t)}{a_{11}}& -\overline{v_1}(-x,t)
	\end{pmatrix}
	(I+O(k))
	\begin{pmatrix}
	1& 0\\
	0& \frac{1}{k}
	\end{pmatrix}, & k\rightarrow 0,\,k\in\mathbb{C}^+,\\
	\label{n-i0}
	& M(x,t,k)=\frac{2i}{A}
	\begin{pmatrix}
	-\overline{v_2}(-x,t)& \frac{v_1(x,t)}{a_{21}}\\
	-\overline{v_1}(-x,t)& \frac{v_2(x,t)}{a_{21}}
	\end{pmatrix}
	(I+O(k))
	\begin{pmatrix}
	1& 0\\
	0& \frac{1}{k}
	\end{pmatrix}, & k\rightarrow 0,\,k\in\mathbb{C}^-,
	\end{align}
\end{subequations}
where $v_j(x,t)$, $j=1,2$ are not specified.
\end{itemize}

\end{proposition}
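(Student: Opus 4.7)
The plan is to construct $M(x,t,k)$ explicitly from the Jost solutions $\Psi_1,\Psi_2$ determined by the Volterra equations (\ref{psi}), then verify the jump, normalization and residue conditions, and finally pin down the delicate behavior at $k=0$ that distinguishes Case I from Case II.

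First I would exploit the column-wise analyticity of the $\Psi_j$ that follows directly from (\ref{psi}): $\Psi_1^{(1)}$ and $\Psi_2^{(2)}$ extend analytically to $\mathbb{C}^+$, while $\Psi_1^{(2)}$ and $\Psi_2^{(1)}$ extend to $\mathbb{C}^-$ (after suitable renormalization to kill the singularity of $N_\pm$ at $k=0$). Dividing by the appropriate spectral function gives the standard definition
\[
M(x,t,k)=\begin{cases}
\Bigl(\frac{\Psi_1^{(1)}(x,t,k)}{a_1(k)},\;\Psi_2^{(2)}(x,t,k)\Bigr)e^{(ikx+2ik^2t)\sigma_3}, & k\in\mathbb{C}^+,\\[2pt]
\Bigl(\Psi_2^{(1)}(x,t,k),\;\frac{\Psi_1^{(2)}(x,t,k)}{a_2(k)}\Bigr)e^{(ikx+2ik^2t)\sigma_3}, & k\in\mathbb{C}^-.
\end{cases}
\]
Writing the scattering relation (\ref{scmatr}) column by column with the representation (\ref{sm}) and using the unitarity-type identity $a_1 a_2+b\overline{b(-\bar k)}=1$ from Item 4 of Proposition \ref{properties} then produces the jump $M_+=M_-J$ with $r_1=b/a_1$ and $r_2=\overline{b(-\cdot)}/a_2$ exactly as stated. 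The normalization $M=I+O(1/k)$ at infinity follows from the Neumann expansion of the Volterra series in (\ref{psi}) together with $a_j(k)\to 1$ (Item 2). The residue at $k=ik_1$ comes from the simple zero of $a_1$ and the proportionality $\Psi_1^{(1)}(0,0,ik_1)=\gamma_1\Psi_2^{(2)}(0,0,ik_1)$; the $(x,t)$-dependence of the proportionality factor is propagated by the Lax pair, which produces the explicit exponential $e^{-2k_1 x-4ik_1^2 t}$.

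The main obstacle is condition \textbf{(iv)}, the behavior at $k=0$, which is the feature specific to the step-like setting: the matrices $N_\pm(k)$ in (\ref{psi}) are singular at the origin, and this singularity has to be transferred to $M$ through the spectral denominators $a_j(k)$ whose own behavior at zero is fixed by Item 5 of Proposition \ref{properties}. I would handle this by introducing auxiliary solutions $v_1(x,t),v_2(x,t)$ as the finite limits (as $k\to 0$) of the appropriately rescaled columns of $\Psi_{1,2}$, the rescaling being chosen so as to absorb the singular factor $\frac{A}{2ik}$ present in $N_\pm$. In Case I, where $a_2(0)\neq 0$ and $a_1(k)\sim \frac{A^2 a_2(0)}{4k^2}$, combining the $k^{-2}$ behavior of $a_1(k)$ with the $k^{-1}$ singularity of $N_-$ produces the prefactor $\frac{4}{A^2 a_2(0)}$ in (\ref{+i0}) and only a removable singularity in the other column, yielding (\ref{z}). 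In Case II, where $a_2(0)=0$ is a simple zero (so $a_{21}\neq 0$) and $ka_1(k)\to a_{11}\neq 0$, the bookkeeping is redone with one more pole on each side, giving the symmetric-looking singular structure (\ref{nz}). A consistency check using the involution $\Lambda\overline{\Psi_1(-x,t,-\bar k)}\Lambda^{-1}=\Psi_2(x,t,k)$ confirms that the four functions appearing in (\ref{z}) and (\ref{nz}) are indeed determined by only two independent scalars $v_1,v_2$.

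For the reconstruction, I would expand $M(x,t,k)=I+\frac{M_1(x,t)}{k}+O(k^{-2})$ as $k\to\infty$ and substitute into the $x$-part of the Lax pair; the off-diagonal entries of $M_1$ then reproduce $q(x,t)$ and $\bar q(-x,t)$, giving both (\ref{sol}) and, via Item 3 of Proposition \ref{properties}, the alternative formula (\ref{sol1}). Uniqueness of the RH problem (given the jump, the residue, the normalization, and the prescribed singular behavior at $k=0$) follows from the standard vanishing lemma argument adapted so that the $k=0$ conditions (\ref{z}) or (\ref{nz}) suppress non-integrable growth there; this is what makes the whole RH problem well-posed and identifies $M$ with the function built from the Jost solutions.
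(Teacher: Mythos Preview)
Your sketch is the standard inverse-scattering construction and is correct in outline: define $M$ columnwise from the analytic columns of $\Psi_1,\Psi_2$ divided by $a_1,a_2$, read off the jump from the scattering relation, get the residue from the simple zero of $a_1$, and track the $k\to0$ singularities of $N_\pm$ through the known behavior of $a_1,a_2$ at the origin (Item~5 of Proposition~\ref{properties}) to obtain (\ref{z}) and (\ref{nz}). This is exactly the route one expects.

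However, note that the present paper does \emph{not} prove this proposition at all: it is stated with the attribution \cite{RS2} and simply recalled from that earlier work as background for the asymptotic analysis in Section~\ref{secas}. So there is no ``paper's own proof'' to compare against here; your write-up supplies what the paper deliberately omits. If anything, your sketch is more detailed than what the paper offers, and it is consistent with the construction carried out in \cite{RS2}.
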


\section{Asymptotics in curved wedges}\label{secas}
In \cite{RS2} we presented the long-time asymptotics of the solution of 
problem (\ref{1}) with the step-like boundary values (\ref{2.0}) along the rays $\xi=\frac{x}{4t}=const$ for all $\xi\neq0$. 
Two regions characterized by qualitatively different asymptotics 
were specified (which turn to be the same in the both Case I and Case II): 
the  decaying region  $x<0$ and the ``modulated constant'' region $x> 0$, where 
$q(x,t)\sim A\delta^2(0,\xi)$ with
\begin{equation}\label{xidel}
\delta(0,\xi)=\exp\left\{\frac{1}{2\pi i}
\int_{-\infty}^{-\xi}\frac{\ln(1+r_1(\zeta)r_2(\zeta))}{\zeta}\,d\zeta\right\}.
\end{equation}
These asymptotics do not, in general, match at $x=0$ because, as $\xi\to +0$,
\begin{equation}\label{rasdelta}
\delta(0,\xi)\sim
\begin{cases}
C_I\exp\left\{
\frac{i}{\pi}\ln\xi\cdot\ln\frac{A^2|a_2(0)|}{2\xi}
+\frac{i\ln^2\xi}{2\pi}\right\}, &
\mbox{Case I},
\\
C_{II}\exp\left\{\frac{i}{2\pi}\ln\xi\cdot\ln a_{11}a_{21}\right\},
& \mbox{Case II},
\end{cases}
\end{equation}
with some constants $C_I, C_{II}\in\mathbb{C}$ (see (\ref{asdelta}) below for the precise formula). Only in a very special case with $b(0)=0$ (which corresponds to 
 $a_{11}a_{21}=1$), the function $\delta(0,\xi)$ has a finite limit as $\xi\to0$, and the  asymptotics obtained for $\xi\ne 0$ 
 can be extended to the rays characterized by $\xi=0$
exhibiting a one-soliton behavior like (\ref{pure-sol}); 
for details, see \cite{RS2}. Otherwise, the asymptotic analysis necessitates 
introducing wedges with curved boundaries associated with slow 
variables other than $\xi$.

When introducing the new slow variable, we are guided by the idea 
that the function $\delta$, as function of this variable, 
should have a bounded modulus and (increasing) oscillations with 
well-defined principal terms and well-controlled  errors (see (\ref{delta}), (\ref{nuas}) and (\ref{rechi}) below). 
 Moreover, the new slow variable must be chosen in such a way that we are able to apply the nonlinear steepest decent method to the associated matrix Riemann-Hilbert problem (particularly, the phase function should have the structure [big parameter]$\times$ [a function depending on the slow variable and the (scaled) complex parameter only]). An important problem here is to 
provide error estimates that are smaller (in order) that 
the main asymptotic term.

We propose 
to  extend the asymptotics obtained in \cite{RS2}  into the curved wedges bounded by the parabola $t=Cx^2$, $C>0$, by introducing 
(i) the scaled spectral parameter $z=z(k)$ and 
(ii) the slow variable $s$ as follows:
\begin{equation}\label{scurve}
z=kx^{1-\alpha},\qquad s=\frac{x^{2-\alpha}}{4t},
\end{equation}
where $\alpha\in(0,1)$,  $x>0$ and $s>0$ (recall that due to (\ref{sol}) and (\ref{sol1}), the asymptotics for the RH problem for $x>0$ allows presenting
the asymptotics for $q(x,t)$ for the negative values of $x$). 
Notice that as $t\to\infty$, we have  $x\to+\infty$ for all fixed $\alpha$ 
and $s$, and 
$\xi\equiv\frac{x}{4t}=sx^{\alpha-1}$ 
decays to zero (the smaller $\alpha$, the faster is the decay) as $x\to\infty$.

In terms of $z$ and $s$, the phase function in the jump matrix $J(x,t,k)$ can be written in the form:
\begin{equation}
2ikx+4ik^2t=2ix^{\alpha}\hat{\theta}(z,s)
\end{equation}
with
\begin{equation}
\hat{\theta}(z,s)=z + \frac{z^2}{2s}.
\end{equation}
It is easy to see that $\left.\frac{\partial\hat{\theta}(z,s)}{\partial z}\right|_{z=-s}=0$, so the stationary phase point of $\hat{\theta}(z,s)$ is $z=-s$ and the signature table is similar to that for the slow variable $\xi$ (see Figure \ref{signtable}).
\begin{figure}[ht]
	%\begin{minipage}[h]{0.99\linewidth}
	\centering{\includegraphics[scale=0.04]{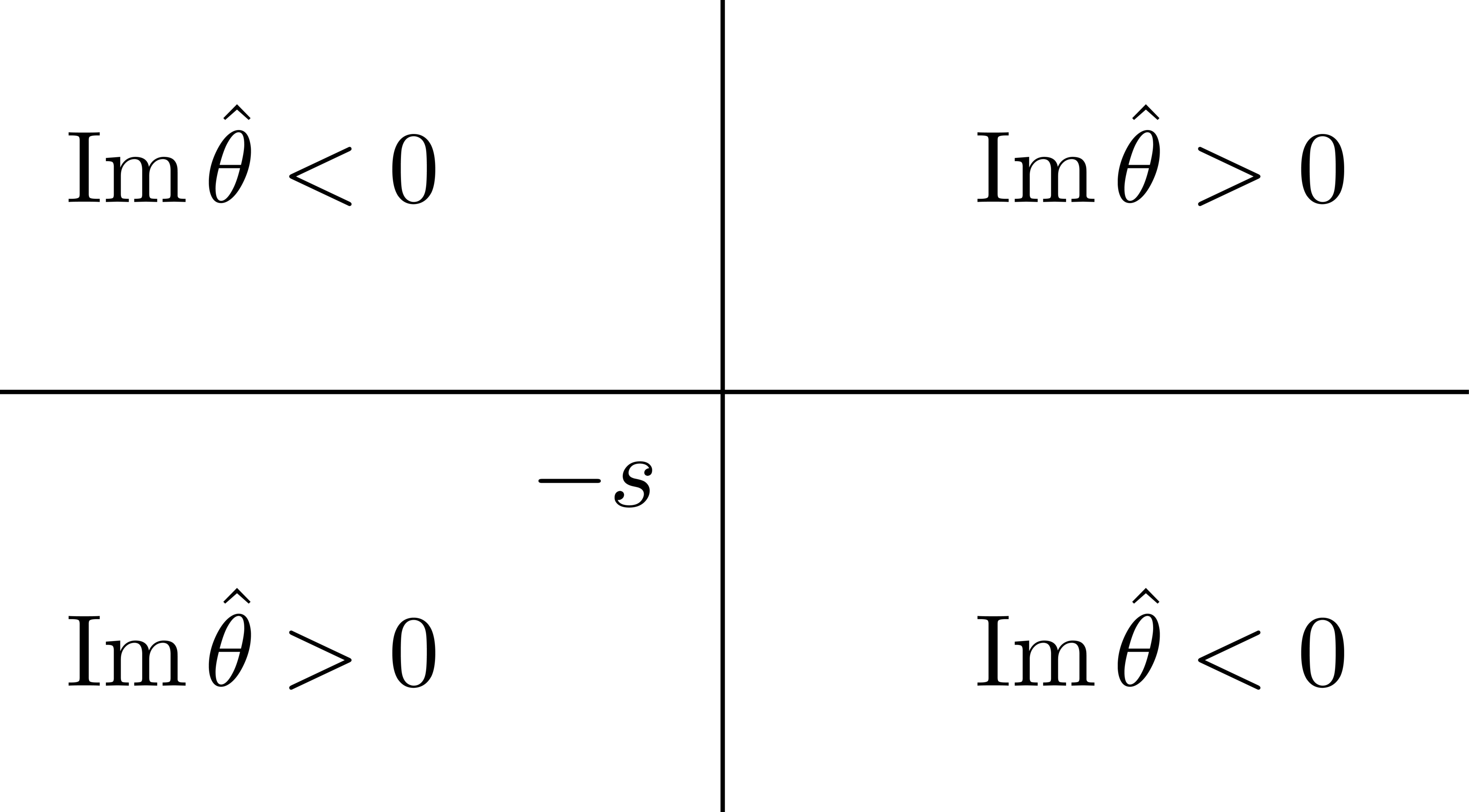}}
	\caption{Signature table of the phase function $\hat\theta(z,s)$ in the z-plane}
	\label{signtable}
	%\end{minipage}
\end{figure}
According to the nonlinear steepest descent method,
in order to get rid of the diagonal factors in the lower-upper triangular factorization of the jump matrix of the Riemann--Hilbert problem (see e.g. \cite{RS2, DIZ, Len15}), 
we introduce the  sectionally analytic scalar function 
(cf. (\ref{xidel}))
\begin{equation}
\hat{\delta}(z,s,t)=\exp\left\{\frac{1}{2\pi i}\int_{-\infty}^{-s}
\frac{\ln(1+\hat{r}_1(\zeta)\hat{r}_2(\zeta))}{\zeta-z}\,d\zeta
\right\},
\end{equation}
where
\begin{equation}\label{r12z}
\hat{r}_j(z):=r_j(zx^{\alpha-1})\equiv
r_j\left(z(4st)^{\frac{\alpha-1}{2-\alpha}}\right),\quad j=1,2.
\end{equation}
Integrating by parts, the function $\hat{\delta}(z,s,t)$ can be written as
\begin{equation}\label{delta}
\hat{\delta}(z,s,t)=\left(z+s\right)^{i\hat{\nu}(-s,t)}e^{\hat{\chi}(z,s,t)},
\end{equation}
where
\begin{equation}
\hat{\nu}(-s,t) = -\frac{1}{2\pi}\ln(1+\hat{r}_1(-s)\hat{r}_2(-s)),
\end{equation}
and
\begin{equation}\label{chi}
\hat{\chi}(z,s,t) = -\frac{1}{2\pi i}\int_{-\infty}^{-s}\ln(z-\zeta)\,
d_{\zeta}\ln(1+\hat{r}_1(\zeta)\hat{r}_2(\zeta)).
\end{equation}

In order to have $e^{\hat\chi(z,s,t)}$ bounded, for $z\geq-s$, 
as $t\to\infty$ (see (\ref{rechi}) below), in the present paper 
we adopt one more assumption on the behavior of the spectral functions:

\underline{\textbf{Assumption 2}}
\\
We assume that
\begin{equation}\label{arg0}
\lim\limits_{k\to 0}\left[\int_{-\infty}^{k}d_\zeta\arg(1+r_1(\zeta)r_2(\zeta))\right]=0.
\end{equation}

 Notice that in  Case I, (\ref{arg0}) is consistent with the assumption 
$\int_{-\infty}^{k}d_\zeta\arg(1+r_1(\zeta)r_2(\zeta))\in(-\pi,\pi)$, $k<0$
adopted  in \cite{RS2}, but in  Case II, it imposes an additional restriction: 
$a_{11}a_{21}>0$ or, equivalently, $|b(0)|\in[0,1)$.

\begin{proposition}\label{aschinu}
Let
$\arg(1+r_1(k)r_2(k))$ satisfy  (\ref{arg0}). Then the functions $\hat\nu(-s,t)$ and $\hat\chi(z,s,t)$ have the following behavior as $t\to\infty$ uniformly for $c\leq s\leq C$, with any fixed $C>c>0$:
\begin{equation}\label{nuas}
\hat{\nu}(-s,t) = 
\begin{cases}
\frac{1-\alpha}{\pi(2-\alpha)}\ln(4st)+\frac{1}{\pi}\ln\frac{A|a_2(0)|}{2s}
+O\left(t^{\frac{\alpha-1}{2-\alpha}}\right),&t\to\infty,\quad\mbox{Case I},\\
\frac{1}{2\pi}\ln a_{11}a_{21}(0)
+O\left(t^{\frac{\alpha-1}{2-\alpha}}\right),&t\to\infty,\quad\mbox{Case II},
\end{cases}
\end{equation}
and 
\begin{equation}\label{rechi}
\Re\hat\chi(z,s,t) = \frac{1}{2\pi}
\int_{-\infty}^{0}\frac{\arg(1+r_1(\zeta)r_2(\zeta))}{\zeta}
\,d\zeta+O\left(t^{\frac{\alpha-1}{2-\alpha}}\ln t\right),\quad z\geq-s,\quad t\to\infty
\end{equation}
for all fixed $z\geq-s$.
Moreover, in Case I we have
\begin{align}
\label{chiIa}
&\hat\chi(0,s,t)=\frac{i(1-\alpha)^2}{2\pi(2-\alpha)^2}\ln^2 4st+
\frac{i(1-\alpha)}{\pi(2-\alpha)}\ln\frac{2}{A|a_2(0)|}\cdot\ln 4st+\hat\chi_0(s)+
O\left(t^{\frac{1-\alpha}{\alpha-2}}\ln t\right),\, t\to\infty,
\\
\label{chiIb}
&\hat\chi(-s,s,t)=\frac{i(1-\alpha)^2}{2\pi(2-\alpha)^2}\ln^2 4st+
\frac{i(1-\alpha)}{\pi(2-\alpha)}\ln\frac{2}{A|a_2(0)|}\cdot\ln 4st+\hat\chi_{-s}(s)+
O\left(t^{\frac{1-\alpha}{\alpha-2}}\ln t\right),\, t\to\infty,
\end{align}
with
%\begin{subequations}
\begin{align}
\label{chi0}
&\hat\chi_0(s)=\frac{i\ln^2 s}{2\pi} 
+\frac{i}{2\pi}\int_{-\infty}^{-1}\ln(-\zeta)\,d_{\zeta}\ln(1+r_1(\zeta)r_2(\zeta))
+\frac{i}{2\pi}\int_{-1}^{0}\ln(-\zeta)\,
d_{\zeta}\ln\frac{1+r_1(\zeta)r_2(\zeta)}{\zeta^2},
\\
&\hat\chi_{-s}(s) = \hat\chi_0(s)+\frac{\pi i}{6},
\end{align}
%\end{subequations}
and in Case II, for all fixed $z\geq-s$,
\begin{equation}\label{chiII}
\hat\chi(z,s,t)=\frac{i(1-\alpha)}{2\pi(\alpha-2)}\ln(a_{11} a_{21})\cdot
\ln 4st+\hat\chi_1+O\left(t^{\frac{1-\alpha}{\alpha-2}}\ln t\right),\quad t\to\infty,
\end{equation}
where
\begin{equation}\label{chi1}
\hat\chi_1=\frac{i}{2\pi}\int_{-\infty}^{0}\ln(-\zeta)\,
d_{\zeta}\ln(1+r_1(\zeta)r_2(\zeta)).
\end{equation}
\end{proposition}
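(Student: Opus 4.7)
My plan hinges on the single change of variables $k=\zeta x^{\alpha-1}$, which makes $\hat r_j(\zeta)=r_j(k)$, sends $d\zeta/\zeta$ to $dk/k$, maps the upper endpoint $\zeta=-s$ to $k=-sx^{\alpha-1}\to 0^-$, and reduces the analysis to the local behavior of $\ln(1+r_1(k)r_2(k))$ at $k=0$. From $1+r_1 r_2=1/(a_1a_2)$ (Property 4 of Proposition \ref{properties}) together with Property 5, one obtains the local expansion
\begin{equation*}
\ln(1+r_1(k)r_2(k))=\begin{cases}
2\ln|k|+c_I+O(k),& \text{Case I, } c_I=\ln\tfrac{4}{A^2 a_2(0)^2},\\
-\ln(a_{11}a_{21})+O(k),& \text{Case II},
\end{cases}
\end{equation*}
as $k\to 0$, and in both cases $\arg(1+r_1 r_2)(k)=O(k)$ (automatic in Case I since the leading term is positive real, and imposing $a_{11}a_{21}>0$ in Case II as noted after Assumption~2).

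With this in hand, the asymptotic for $\hat\nu(-s,t)$ follows by plugging $k=-sx^{\alpha-1}$ into the local expansion and using $\ln x=\ln(4st)/(2-\alpha)$: the decomposition $2\ln|k|=2\ln s+2(\alpha-1)\ln x$ produces the growing $\ln(4st)$ contribution with error $O(sx^{\alpha-1})=O(t^{(\alpha-1)/(2-\alpha)})$. For $\Re\hat\chi(z,s,t)$, I use that $\ln(z-\zeta)$ is real-valued for $z\ge-s\ge\zeta$, so
\begin{equation*}
\Re\hat\chi(z,s,t)=-\frac{1}{2\pi}\int_{-\infty}^{-s}\ln(z-\zeta)\,d_\zeta\arg(1+\hat r_1\hat r_2)(\zeta);
\end{equation*}
integration by parts yields a boundary contribution at $\zeta=-s$ of size $\ln(z+s)\cdot\arg(1+r_1 r_2)(-sx^{\alpha-1})=O(x^{\alpha-1})$ by Assumption~2, with vanishing contribution at $-\infty$. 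After the $k$-substitution the remaining integral $\int_{-\infty}^{-sx^{\alpha-1}}\arg(1+r_1 r_2)(k)/(zx^{\alpha-1}-k)\,dk$ is analyzed by the splitting $\frac{1}{zx^{\alpha-1}-k}=-\frac{1}{k}+\frac{zx^{\alpha-1}}{k(zx^{\alpha-1}-k)}$: the first piece recovers the target value $\frac{1}{2\pi}\int_{-\infty}^0\arg(1+r_1 r_2)/\zeta\,d\zeta$, and the second is bounded by $zx^{\alpha-1}\cdot|\ln(zx^{\alpha-1})|=O(t^{(\alpha-1)/(2-\alpha)}\ln t)$.

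For the full asymptotic of $\hat\chi$, I plan to use the identity $\hat\chi(z,s,t)=\ln\hat\delta(z,s,t)-i\hat\nu(-s,t)\ln(z+s)$, where after the variable change $\ln\hat\delta(z,s,t)=\frac{1}{2\pi i}\int_{-\infty}^{-sx^{\alpha-1}}\ln(1+r_1 r_2)(k)/(k-zx^{\alpha-1})\,dk$. In Case II, away from $k=0$ the integrand differs from $\ln(1+r_1 r_2)(k)/k$ by $O(x^{\alpha-1}/k^2)$; inserting the boundary value $\ln(1+r_1 r_2)(k)\to-\ln(a_{11}a_{21})$ at $0$, the piece $-\ln(a_{11}a_{21})\int_{-1}^{-sx^{\alpha-1}}dk/k=-\ln(a_{11}a_{21})\ln(sx^{\alpha-1})$ generates the $\ln(4st)$ term of (\ref{chiII}), and the $x$-independent residual recast in $\zeta$ yields $\hat\chi_1$ of (\ref{chi1}). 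The delicate case will be Case I at $z=0$, where I split $\ln\hat\delta(0,s,t)=\frac{1}{2\pi i}\int_{-\infty}^{-sx^{\alpha-1}}G(k)/k\,dk$ (with $G(k):=\ln(1+r_1 r_2)(k)$) at $k=-1$ and, on $[-1,-sx^{\alpha-1}]$, substitute $G(k)=2\ln|k|+c_I+O(k)$ and use the elementary antiderivative $\int(2\ln|k|+c_I)\,dk/k=\ln^2|k|+c_I\ln|k|$ to produce $\ln^2(sx^{\alpha-1})+c_I\ln(sx^{\alpha-1})$. Expanding $\ln(sx^{\alpha-1})=\ln s-(1-\alpha)\ln(4st)/(2-\alpha)$ yields the $\ln^2(4st)$ and $\ln(4st)$ contributions of (\ref{chiIa}) together with a cross term $\ln s\cdot\ln(4st)$ that cancels exactly against the matching piece in $i\hat\nu(-s,t)\ln s$; the $t$-independent residual becomes, after recasting in $\zeta$, precisely $\hat\chi_0(s)$ of (\ref{chi0}) (the factor $1/\zeta^2$ in the second integral there absorbs the $2\ln|\zeta|$ singularity).

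Finally, to derive (\ref{chiIb}) I would compute the difference
\begin{equation*}
\hat\chi(-s,s,t)-\hat\chi(0,s,t)=-\frac{1}{2\pi i}\int_{-\infty}^{-s}\ln\!\bigl(1+\tfrac{s}{\zeta}\bigr)\,d_\zeta\ln(1+\hat r_1\hat r_2)(\zeta),
\end{equation*}
in which the logarithmic-in-$t$ contributions cancel automatically. After $k=\zeta x^{\alpha-1}$ and using $G'(k)\sim 2/k$ near $k=0$, the main contribution becomes $-\frac{1}{\pi i}\int_{-\delta}^{-sx^{\alpha-1}}\ln(1+sx^{\alpha-1}/k)\,dk/k$; substituting $u=-sx^{\alpha-1}/k$ and passing to the limit $t\to\infty$, this equals $\frac{1}{\pi i}\int_0^1\ln(1-u)/u\,du=\frac{1}{\pi i}\cdot(-\pi^2/6)=\pi i/6$, with error $O(x^{\alpha-1}\ln x)$. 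The main obstacle throughout will be the bookkeeping in Case I, where three features interact---the $2\ln|k|$ singularity of $G$, the moving upper endpoint $-sx^{\alpha-1}$, and the $\ln(z+s)$ factor extracted from $(z+s)^{i\hat\nu(-s,t)}$---and one must verify that after the cancellations the $t$-independent residual is exactly $\hat\chi_0(s)$ as written; all other ingredients (the dilogarithm identity $\int_0^1\ln(1-u)/u\,du=-\pi^2/6$ for the $\pi i/6$ constant, and the standard IBP estimates for the error bounds) are elementary.
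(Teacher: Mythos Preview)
Your proposal is correct and follows essentially the same route as the paper's proof in Appendix~A: the change of variable $k=\zeta x^{\alpha-1}$ reducing everything to the local behavior of $G(k)=\ln(1+r_1(k)r_2(k))$ at $k=0$, the split of $G$ into $2\ln|k|+c_I$ plus a regular remainder in Case~I, and the $\pi^2/6$ identity for the constant shift at $z=-s$. The only organizational differences are that the paper factors $\ln(z-x^{1-\alpha}\zeta)=(1-\alpha)\ln x+\ln(zx^{\alpha-1}-\zeta)$ directly inside the $\hat\chi$-integral (your identity $\hat\chi=\ln\hat\delta-i\hat\nu\ln(z+s)$ is an equivalent regrouping), and the paper obtains the $\pi i/6$ via the series $\sum_{n\ge1}n^{-2}$ rather than your dilogarithm integral $\int_0^1\ln(1-u)\,du/u=-\pi^2/6$, which is of course the same identity.
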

\begin{proof}
	See  Appendix A.
\end{proof}

In order to make the derivation of our main result more transparent,
in what follows we adopt the following technical 

\underline{\textbf{Assumption 3}} 

We assume that 
 the reflection coefficients $r_j(k)$, $j=1,2$ can be analytically continued into the whole complex plane. 

This  takes place, for example, when the initial data are such that $q_0(x)=0$ for $x<-R$ and $q_0(x)=A$ for $x>R$, with some $R>0$. Otherwise,  
techniques of analytical approximations of the reflection coefficients 
and associated error estimates
can be applied, see, e.g.,  \cite{DZ} and \cite{Len15}. 

The formalism of the nonlinear steepest descent method is based
on subsequent transformations of the basic Riemann--Hilbert problem, 
aimed at arriving (after appropriate rescaling, if needed) at 
some model RH problem that can be solved explicitly. Particularly,
having in the phase function a large parameter multiplied by a quadratic polynomial
with respect of the (scaled) spectral variable would lead
to the model problem that can be solved in terms of the parabolic cylinder functions
\cite{I1, DZ, Len15}. The specific feature of the analysis
in the present paper, as we will see, is that the parameters of the parabolic cylinder functions are not constants but depend on the fast variable $t$.

The series of transformations of the basic RH problem (for $M(x,t,k)$) is similar to that made in \cite{RS2}, so here we only briefly recall the definition of new sectionally meromorphic matrix functions (for more details see the corresponding transformations in \cite{RS2}). The first transformation enables us to get rid of the diagonal factors in the lower-upper triangular factorization:
\begin{equation}\label{ft}
\tilde{M}(x,t,z)=M(x,t,k(z))\hat\delta^{-\sigma_3}(z,s,t).
\end{equation}
Next, in order to ``get off'' the real axis we introduce the matrix $\hat M(x,t,z)$ as follows:
%$\hat\Gamma=\bigcup\limits_{j=1}^{4}\hat\gamma_j$, where 
%$\hat\gamma_j=\{-s+te^{i(\frac{5\pi}{4}-\frac{\pi j}{2})}|t\in[0,\infty)\}$
(see Figure \ref{mod1s} where domains $\hat\Omega_j$, $j=\overline{0,4}$ are defined)
\begin{figure}[ht]
	%\begin{minipage}[h]{0.99\linewidth}
	\centering{\includegraphics[scale=0.12]{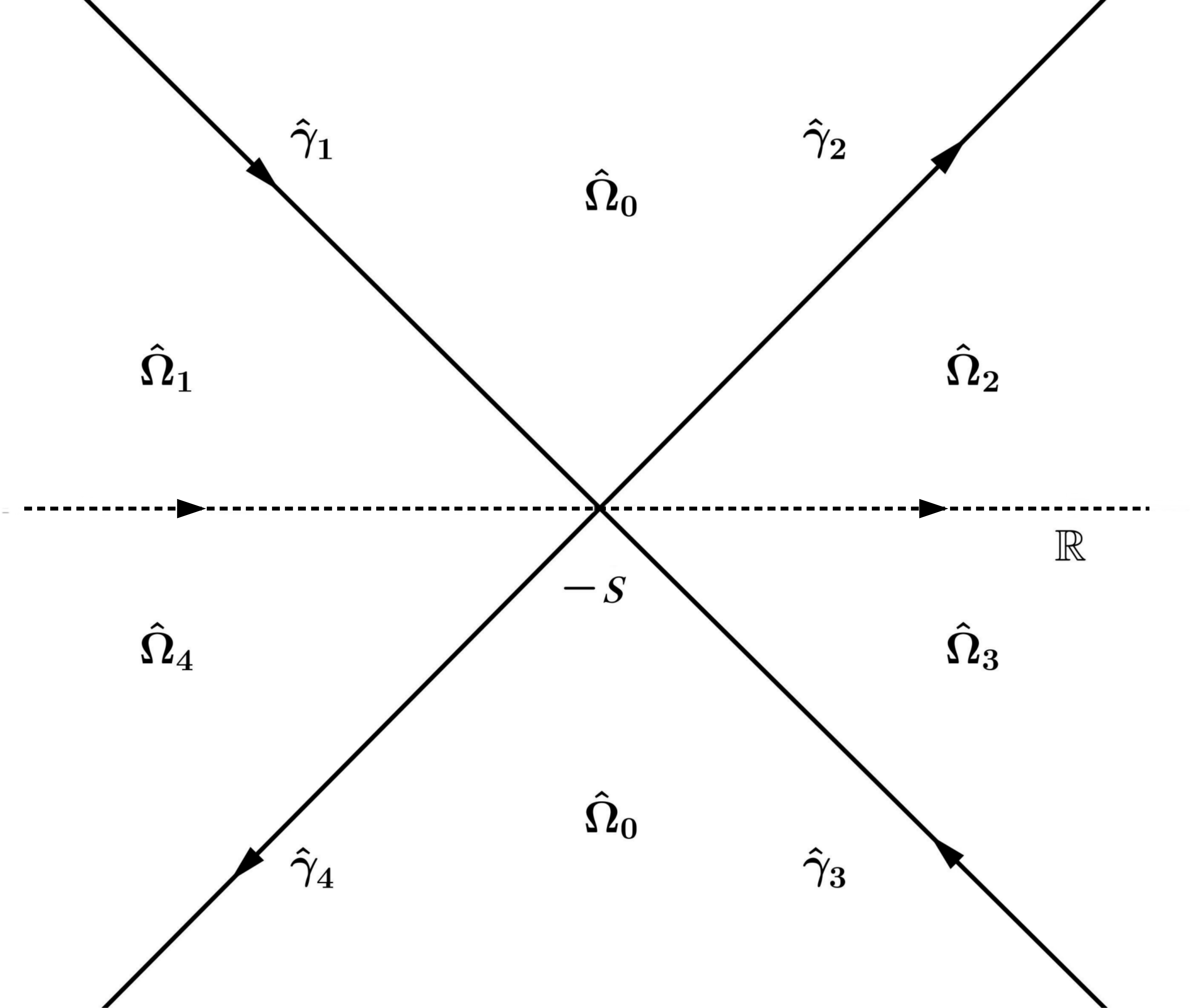}}
	\caption{Contour $\hat\Gamma=\bigcup\limits_{j=1}^{4}\hat\gamma_j$ and domains $\hat\Omega_j$, $j=\overline{0,4}$ for $\hat M(x,t,z)$ in the $z$-plane.}
	\label{mod1s}
	%\end{minipage}
\end{figure}
\begin{equation}
\hat{M}(x,t,z)=
\begin{cases}
\tilde{M}(x,t,z),& z\in\hat\Omega_0,\\
\tilde{M}(x,t,z)
\begin{pmatrix}
1& \frac{-\hat r_2(z)\hat\delta^{2}(z,s,t)}
{1+\hat r_1(z)\hat r_2(z)}e^{-2ix^{\alpha}\hat\theta}\\
0& 1\\
\end{pmatrix}
,& z\in\hat\Omega_1,
\\
\tilde{M}(x,t,z)
\begin{pmatrix}
1& 0\\
-\hat r_1(z)\hat\delta^{-2}(z,s,t)e^{2ix^{\alpha}\hat\theta}& 1\\
\end{pmatrix}
,& z\in\hat\Omega_2,
\\
\tilde{M}(x,t,z)
\begin{pmatrix}
1& \hat r_2(z)\hat\delta^2(z,s,t)e^{-2ix^{\alpha}\hat\theta}\\
0& 1\\
\end{pmatrix}
,& z\in\hat\Omega_3,
\\
\tilde{M}(x,t,z)
\begin{pmatrix}
1& 0\\
\frac{\hat r_1(z)\hat\delta^{-2}(z,s,t)}{1+\hat r_1(z)\hat r_2(z)}e^{2ix^{\alpha}\hat\theta}& 1\\
\end{pmatrix}
,& z\in\hat\Omega_4.
\end{cases}
\end{equation}
Matrix $\hat M(x,t,z)$ solves the RH problem on the cross $\hat\Gamma=\bigcup\limits_{j=1}^{4}\hat\gamma_j$, where $\hat\gamma_j=\{-s+te^{i(\frac{5\pi}{4}-\frac{\pi j}{2})}|t\in[0,\infty)\}$ (see Figure \ref{mod1s}) which is characterized by the jump conditions
\begin{subequations}\label{RHhatz}
\begin{equation}
\hat{M}_+(x,t,z)=\hat{M}_-(x,t,z)\hat{J}(x,t,z),\qquad k\in\hat\Gamma,
\end{equation}
with 
\begin{equation}
\label{J-hatz}
\hat{J}(x,t,z)=
\begin{cases}
\begin{pmatrix}
1& \frac{\hat r_2(z)\hat \delta^{2}(z,s,t)}{1+\hat r_1(z)\hat r_2(z)}
e^{-2ix^{\alpha}\hat \theta}\\
0& 1\\
\end{pmatrix}
,& z\in\hat\gamma_1,
\\
\begin{pmatrix}
1& 0\\
\hat r_1(z)\hat \delta^{-2}(z,s,t)e^{2ix^{\alpha}\hat \theta}& 1\\
\end{pmatrix}
,& z\in\hat\gamma_2,
\\
\begin{pmatrix}
1& -\hat r_2(z)\hat \delta^2(z,s,t)e^{-2ix^{\alpha}\hat \theta}\\
0& 1\\
\end{pmatrix}
,& z\in\hat\gamma_3,
\\
\begin{pmatrix}
1& 0\\
\frac{-\hat r_1(z)\hat \delta^{-2}(z,s,t)}{1+\hat r_1(z)\hat r_2(z)}
e^{2ix^{\alpha}\hat \theta}& 1\\
\end{pmatrix}
,& z\in\hat\gamma_4,
\end{cases}
\end{equation}
the normalization condition
\begin{equation}
\hat{M}(x,t,z)\rightarrow I, \qquad z\rightarrow\infty,
\end{equation}
and the residue conditions
\begin{equation}
\label{15.3}
\underset{z=ik_1x^{1-\alpha}}{\operatorname{Res}} \hat{M}^{(1)}(x,t,z)=
\hat c_1(x,t)\hat{M}^{(2)}(x,t,ik_1x^{1-\alpha}),
\end{equation}
\begin{equation}
\label{res-convent}
\underset{z=0}{\operatorname{Res}}\  \hat{M}^{(2)}(x,t,z)=
\hat c_0(x,t)\hat{M}^{(1)}(x,t,0),
\end{equation}
\end{subequations}
where $\hat c_1(x,t)=\frac{\gamma_1 x^{1-\alpha}}
{\dot{a}_1(ik_1)\hat \delta^{2}(ik_1x^{1-\alpha},s)}
e^{-2k_1x-4ik_1^2t}$ 
and $\hat c_0(x,t)=\frac{A}{2i}x^{1-\alpha}\hat\delta^2(0,s,t)$.

The solution of the original initial value problem can be represented, in terms 
of the solution $\hat{M}(x,t,z)$ of this RH problem, as follows:
\begin{equation}
q(x,t) = 2ix^{\alpha-1}\lim\limits_{z\to\infty}z\hat M(x,t,z),\quad x>0,
\end{equation}
and
\begin{equation}
q(-x,t) = -2ix^{\alpha-1}\lim\limits_{z\to\infty}z\overline{\hat M(x,t,z)}, \quad x>0.
\end{equation}

The (singular)  RH problem (\ref{RHhatz}), which involves the residue conditions, can be transformed to a regular one  by using the Blashke--Potapov factors \cite{FT}:
\begin{proposition}(cf. \cite{RS2})
\label{propsolz}
The solution $q(x,t)$ can be represented as follows:
\begin{subequations}
\label{sol+0z}
\begin{align}
\label{sol+0az}
q(x,t)&=-2k_1P_{12}(x,t)+
2ix^{\alpha-1}\lim\limits_{z\to\infty}z\hat{M}^{R}_{12}(x,t,z),
\quad x>0,\\
q(-x,t)&=-2k_1\overline{P_{21}(x,t)}
-2ix^{\alpha-1}\lim\limits_{z\to\infty}
z\overline{\hat{M}^{R}_{21}(x,t,z)},\quad x>0.
\end{align}
\end{subequations}
Here $\hat{M}^{R}(x,t,k)$ solves the regular Riemann-Hilbert problem:
\begin{subequations}\label{RHRz}
\begin{equation}
\begin{cases}
\hat{M}^R_+(x,t,z)=\hat{M}^R_-(x,t,z)\hat{J}^R(x,t,z),& k\in\hat\Gamma,\\
\hat{M}^R(x,t,z)\rightarrow I, & k\rightarrow\infty,
\end{cases}
\end{equation}
with 
\begin{equation}
\label{J^Rz}
\hat{J}^R(x,t,z)=
\begin{pmatrix}
1& 0\\
0& \frac{z-ik_1x^{1-\alpha}}{z}
\end{pmatrix}
\hat{J}(x,t,z) 
\begin{pmatrix}
1& 0\\
0& \frac{z}{z-ik_1x^{1-\alpha}}
\end{pmatrix},\quad k\in\hat{\Gamma},
\end{equation}
\end{subequations}
and $P_{12}$ and $P_{21}$ are determined in terms of $\hat M^R$ as follows:
\begin{equation}
\label{P}
P_{12}(x,t)=\frac{g_1(x,t)h_1(x,t)}
{g_1(x,t)h_2(x,t)-g_2(x,t)h_1(x,t)},\,
P_{21}(x,t)=-\frac{g_2(x,t)h_2(x,t)}
{g_1(x,t)h_2(x,t)-g_2(x,t)h_1(x,t)},
\end{equation}
where $g(x,t)=\left(
\begin{smallmatrix}g_1(x,t)\\g_2(x,t) \end{smallmatrix}\right)$
and  $h(x,t)=\left(
\begin{smallmatrix}h_1(x,t)\\h_2(x,t) \end{smallmatrix}\right)$
are given by
\begin{subequations}\label{ghz}
\begin{align}
g(x,t)&=ik_1x^{1-\alpha}\hat{M}^{R(1)}(x,t,ik_1x^{1-\alpha})-
\hat c_1(x,t)\hat{M}^{R(2)}(x,t,ik_1x^{1-\alpha}),\\
h(x,t)&=ik_1x^{1-\alpha}\hat{M}^{R(2)}(x,t,0)+
\hat c_0(x,t)\hat{M}^{R(1)}(x,t,0).
\end{align}
\end{subequations}
\end{proposition}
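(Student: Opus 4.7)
The plan is to convert the meromorphic RH problem (\ref{RHhatz}), whose singular data consist of the two residue conditions at $z = ik_1 x^{1-\alpha}$ and $z = 0$, into a regular (holomorphic across the complement of $\hat\Gamma$) RH problem by a Blaschke--Potapov ``dressing'' transformation, and then to recover $q(x,t)$ by reading off the $1/z$-coefficient of $\hat M$ as $z \to \infty$, decomposed into a holomorphic part (the contribution of $\hat M^R$) and a finite-rank (``soliton'') part (the contribution of $P_{12}, P_{21}$).

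First I would write an ansatz of the form
$$
\hat M(x,t,z) = \hat M^R(x,t,z) \left( I + \frac{A_1(x,t)}{z - ik_1 x^{1-\alpha}} + \frac{A_0(x,t)}{z} \right),
$$
where $A_0, A_1$ are rank-one matrices (the Blaschke--Potapov nilpotent factors) and $\hat M^R$ is required to be analytic in $\mathbb{C}\setminus\hat\Gamma$. Substituting this ansatz into the residue conditions (\ref{15.3}) and (\ref{res-convent}) and matching principal parts forces $A_0, A_1$ to be rank-one matrices explicitly determined by the values $\hat M^R(x,t,ik_1 x^{1-\alpha})$ and $\hat M^R(x,t,0)$ together with the constants $\hat c_1(x,t)$ and $\hat c_0(x,t)$. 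With $A_0, A_1$ so defined, the poles in the ansatz cancel the residues of $\hat M$, so $\hat M^R$ inherits from $\hat M$ only the jump across $\hat\Gamma$; conjugating the jump identity $\hat M_+ = \hat M_- \hat J$ through the diagonal rational factor $\operatorname{diag}(1, (z-ik_1 x^{1-\alpha})/z)$ (which is precisely the scalar Blaschke factor carried by the rank-one ansatz) produces the new jump (\ref{J^Rz}).

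Next I would pass to $z \to \infty$ in the ansatz. Using (\ref{sol}) and (\ref{sol1}) and the fact that $\hat M(x,t,z) = M(x,t,z x^{\alpha-1}) \hat\delta^{-\sigma_3}(z,s,t)$ with $\hat\delta(z,s,t)\to 1$ as $z\to\infty$, the $1/z$-term of $\hat M$ splits as the $1/z$-term of $\hat M^R$ plus the $(1,2)$ and $(2,1)$ entries of $A_0 + A_1$. A direct computation, carried out by solving the coupled linear system for $A_0, A_1$, shows that these entries are $-k_1 P_{12}(x,t)$ and $-k_1\overline{P_{21}(x,t)}$ respectively, with $P_{12}, P_{21}$ as in (\ref{P}), the vectors $g$ and $h$ being precisely the null vectors encoding the two residue conditions (so $g(x,t)$ is the combination of columns of $\hat M^R$ at $z=ik_1 x^{1-\alpha}$ that must vanish and $h(x,t)$ is the analogous combination at $z=0$). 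This yields (\ref{sol+0z}).

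The main technical obstacle is the coupling between the two residues: the condition at $z = ik_1 x^{1-\alpha}$ expresses column 1 in terms of column 2, while the condition at $z = 0$ expresses column 2 in terms of column 1, so the two constraints on $\hat M^R$ are linked through the values $\hat M^R(x,t,ik_1 x^{1-\alpha})$ and $\hat M^R(x,t,0)$. Solving this $2\times 2$ linear system for the unknown coefficients of the rank-one factors $A_0, A_1$ produces the determinantal denominator $g_1 h_2 - g_2 h_1$ in (\ref{P}), and its non-vanishing follows from the (assumed) existence and uniqueness of the solution of the basic RH problem (\ref{RHhatz}). The remaining normalization $\hat M^R \to I$ as $z\to\infty$ and the form (\ref{J^Rz}) of the regularized jump are then immediate from the ansatz.
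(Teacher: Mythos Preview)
Your overall strategy---removing the residue conditions by a Blaschke--Potapov dressing and then reading off $q$ from the $1/z$-coefficient---is exactly what the paper invokes (the proposition is stated with a reference to \cite{RS2} and \cite{FT}; no separate proof is written out here).

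There is, however, a concrete error in your ansatz. Placing the rational factor on the \emph{right}, $\hat M = \hat M^R\bigl(I + A_1/(z - ik_1 x^{1-\alpha}) + A_0/z\bigr)$, cannot produce the jump (\ref{J^Rz}). If $\hat M = \hat M^R G$ with $G$ globally meromorphic, then $\hat J^R = G\hat J G^{-1}$; for this to coincide with the diagonal conjugation $D\hat J D^{-1}$, $D=\operatorname{diag}\bigl(1,(z-ik_1 x^{1-\alpha})/z\bigr)$, the matrix $D^{-1}G$ must commute with $\hat J(x,t,z)$ for every $z\in\hat\Gamma$, which forces $G$ itself to be a scalar multiple of $D$---incompatible with the off-diagonal residue data. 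The correct transformation carries the diagonal factor on the right and the Blaschke--Potapov factor on the \emph{left}:
\[
\hat M(x,t,z)=\Bigl(I+\frac{P(x,t)}{z-ik_1x^{1-\alpha}}\Bigr)\,\hat M^R(x,t,z)\,
\begin{pmatrix}1&0\\0&\dfrac{z-ik_1x^{1-\alpha}}{z}\end{pmatrix},
\]
with a \emph{single} $2\times2$ matrix $P$ (not two separate rank-one pieces at two poles). The left factor, being globally meromorphic, drops out of the jump relation, so the diagonal right factor alone fixes (\ref{J^Rz}); at the same time it transfers the pole of column~2 from $z=0$ to $z=ik_1x^{1-\alpha}$. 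Imposing (\ref{15.3}) and (\ref{res-convent}) then reduces to the pair of linear constraints $Pg=0$ and $(ik_1x^{1-\alpha}I-P)h=0$, with $g,h$ exactly as in (\ref{ghz}); solving this system gives $P$ with off-diagonal entries proportional to (\ref{P}), and the $1/z$-expansion of the left factor contributes the terms $-2k_1P_{12}$ and $-2k_1\overline{P_{21}}$ in (\ref{sol+0z}). With this correction to the side of the dressing factor, the rest of your outline goes through.
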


\begin{cor}
The rough asymptotics of $q(x,t)$ as $t\to\infty$ along the curve $t=\frac{x^{2-\alpha}}{4s}$ with  fixed $s>0$ and $\alpha\in(0,1)$ has the form:
$$
q(x,t)=A\hat\delta^2(0,s,t) + o(1),\quad x>0,\qquad q(-x,t)=o(1),\quad x>0.
$$
Taking into account (\ref{delta}) and Proposition \ref{aschinu}, the asymptotics  for $x>0$ has the form
\begin{equation}\label{rasc}
q(x,t)=
\begin{cases}
\mathrm{Q} e^{i\Psi_I(\alpha, s, t)} + o(1),& t\to\infty,\quad
\mbox{Case I},\\
\mathrm{Q}e^{i\Psi_{II}(\alpha, s, t)} + o(1),& 
t\to\infty,\quad\mbox{Case II},
\end{cases}
\end{equation}
where (notice that $2\Re\hat\chi(0,s,t)\sim
\frac{1}{\pi}\int_{-\infty}^{0}\frac{\arg(1+r_1(\zeta)r_2(\zeta))}
{\zeta}\,d\zeta$ for both Case I and Case II, see (\ref{rechi}))
\begin{equation}\label{AIAII}
\mathrm{Q}=A\exp\left\{\frac{1}{\pi}\int_{-\infty}^{0}
\frac{\arg(1+r_1(\zeta)r_2(\zeta))}{\zeta}\,d\zeta\right\},
\end{equation}
and (see (\ref{nuas}), (\ref{chiIa}) and (\ref{chiII}))
\begin{subequations}\label{PsiI-II}
\begin{align}
\label{PsiI}
&\Psi_I(\alpha, s, t)=
\psi(\alpha)\ln^2 4st+\phi_I(\alpha,s)\ln 4st+
\frac{2}{\pi}\ln s\cdot\ln\frac{A|a_2(0)|}{2s}+2\Im\hat\chi_0(s),\\
&\Psi_{II}(\alpha, s, t)=
\phi_{II}(\alpha)\ln 4st+
\frac{1}{\pi}\ln s\cdot\ln a_{11}a_{21}
+2\Im\hat\chi_1,
\end{align}	
\end{subequations}
with
%\begin{subequations}\label{principle-const}
\begin{align}
%\label{phipr}
\label{principle-const}
&\psi(\alpha)=\frac{(1-\alpha)^2}{\pi(2-\alpha)^2},\quad
\phi_I(\alpha, s)=\frac{2(1-\alpha)}{\pi(2-\alpha)}\ln\frac{2s}{A|a_2(0)|},\quad
\phi_{II}(\alpha)=\frac{1-\alpha}{\pi(\alpha-2)}\ln a_{11}a_{21},
\end{align}
%\end{subequations}
and $\hat\chi_0(s)$ and $\hat\chi_1$  given by (\ref{chi0}) and (\ref{chi1}) respectively.
\end{cor}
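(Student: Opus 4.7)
My plan is to deduce the corollary from the Blaschke--Potapov reconstruction formulas (\ref{sol+0z}) by establishing that the regular factor $\hat{M}^R(x,t,z)$ solving the RH problem (\ref{RHRz}) is close to $I$, so that the leading-order contribution to $q(x,t)$ and $q(-x,t)$ comes entirely from the discrete data encoded in $\hat{c}_0(x,t)$ and $\hat{c}_1(x,t)$. First, I would apply the standard small-norm / nonlinear steepest descent argument to $\hat{M}^R$: the jump matrix $\hat{J}^R - I$ is supported on the cross $\hat{\Gamma}=\bigcup_{j=1}^{4}\hat\gamma_j$, and by construction of the contour and the signature table (Figure \ref{signtable}) the oscillatory factors $e^{\pm 2ix^{\alpha}\hat\theta(z,s)}$ are exponentially decaying in $x^{\alpha}$ as we move along $\hat\gamma_j$ away from the stationary phase point $z=-s$. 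Together with Assumption 3 (analytic continuation of $\hat{r}_j$) and the boundedness of $\hat\delta^{\pm 2}$ for $z\in\hat\Gamma$ guaranteed by Proposition \ref{aschinu}, this gives $\|\hat{J}^R-I\|_{L^1\cap L^\infty(\hat\Gamma)}=o(1)$. Only a shrinking neighborhood of $z=-s$ requires a local parametrix, which contributes $O(x^{-\alpha/2})$ to $\lim_{z\to\infty}z\,\hat{M}^R_{12}(x,t,z)$; since this is multiplied by $x^{\alpha-1}$ in (\ref{sol+0az}), its total contribution to $q(x,t)$ is $o(1)$, which suffices for the rough asymptotics (the precise form of the local parametrix is deferred to the proof of Theorem \ref{th1}).

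Next, using $\hat{M}^R(x,t,z)=I+o(1)$ at the finitely many relevant points $z=0$ and $z=ik_1x^{1-\alpha}$, the vectors in (\ref{ghz}) satisfy $g(x,t)=(ik_1x^{1-\alpha},\,-\hat{c}_1(x,t))^T+o(x^{1-\alpha})$ and $h(x,t)=(\hat{c}_0(x,t),\,ik_1x^{1-\alpha})^T+o(x^{1-\alpha})$. Crucially, $\hat{c}_1(x,t)=O(e^{-2k_1 x})$ decays super-exponentially, so the determinant $g_1h_2-g_2h_1$ is dominated by $-k_1^2 x^{2(1-\alpha)}$, while $g_1h_1\sim \frac{Ak_1}{2}x^{2(1-\alpha)}\hat\delta^2(0,s,t)$ since $\hat{c}_0(x,t)=\tfrac{A}{2i}x^{1-\alpha}\hat\delta^2(0,s,t)$. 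Plugging into (\ref{P}) yields $-2k_1 P_{12}(x,t)=A\hat\delta^2(0,s,t)+o(1)$, while the exponential decay of $g_2=-\hat{c}_1+o(\cdot)$ gives $P_{21}(x,t)=O(e^{-2k_1 x})$. Combined with the $o(1)$ bounds for the $\hat{M}^R$-terms, this delivers exactly $q(x,t)=A\hat\delta^2(0,s,t)+o(1)$ and $q(-x,t)=o(1)$, as claimed.

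Finally, the explicit formulas (\ref{rasc}) and (\ref{PsiI-II}) follow by substituting the factorization $\hat\delta(0,s,t)=s^{i\hat\nu(-s,t)}e^{\hat\chi(0,s,t)}$ from (\ref{delta}) into $A\hat\delta^2(0,s,t)$ and reading off modulus and phase. The modulus is $A\exp(2\Re\hat\chi(0,s,t))$, which by (\ref{rechi}) tends to the constant $\mathrm{Q}$ in (\ref{AIAII}). The phase equals $2\hat\nu(-s,t)\ln s+2\Im\hat\chi(0,s,t)$, and algebraic simplification using (\ref{nuas}) together with (\ref{chiIa}) in Case I (respectively (\ref{chiII}) in Case II) reassembles the terms into $\Psi_I(\alpha,s,t)$ or $\Psi_{II}(\alpha,s,t)$ with the coefficients $\psi(\alpha)$, $\phi_I(\alpha,s)$, $\phi_{II}(\alpha)$ of (\ref{principle-const}); the error terms of order $t^{(\alpha-1)/(2-\alpha)}\ln t$ from Proposition \ref{aschinu} are absorbed into the $o(1)$ remainder.

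The main obstacle is the first step, i.e.\ controlling $\hat{M}^R$: although the off-diagonal entries of $\hat{J}^R-I$ are genuinely exponentially small away from $z=-s$, the local parametrix at $z=-s$ is built from parabolic cylinder functions whose parameter $\hat\nu(-s,t)$ itself grows logarithmically in $t$ (Case I). One therefore needs the uniform bound $|\hat\nu(-s,t)|=O(\ln t)$ together with the boundedness of $\hat\chi$ (both furnished by Proposition \ref{aschinu}) to guarantee that the matching error from the local parametrix is $o(1)$ after multiplication by $x^{\alpha-1}$; this is the reason we only claim the \emph{rough} asymptotics here and relegate the precise parabolic-cylinder analysis to the proof of Theorem \ref{th1}.
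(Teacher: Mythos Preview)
Your approach is correct and coincides with the paper's: the Corollary is obtained by feeding the leading-order approximation $\hat M^R\approx I$ into the Blaschke--Potapov formulas (\ref{sol+0z})--(\ref{ghz}), exactly as the paper does (more precisely) in the proof of Theorem~\ref{th1}, and then invoking Proposition~\ref{aschinu} to expand $A\hat\delta^2(0,s,t)$. One small slip: since the $o(x^{1-\alpha})$ error in $g_2$ coming from $\hat M^R-I$ dominates the exponentially small $-\hat c_1$, you only get $P_{21}(x,t)=o(1)$ (polynomially), not $O(e^{-2k_1x})$; this does not affect the conclusion $q(-x,t)=o(1)$, but it is precisely this polynomial contribution that yields the leading term in (\ref{asIl0}) and (\ref{asIIl0}) of Theorem~\ref{th1}.
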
 
\begin{remark}
Here and below, we prefer to keep  $4st$ as the argument of the logarithms since 
(i) in terms of this expression, the coefficients (\ref{principle-const}) have simpler form and 
(ii) it is directly related to $x$  (see (\ref{scurve})), which 
is convenient when studying a domain surrounding 
the ray $x=0$.
\end{remark}
\begin{remark}
The behavior of the solution along the curves $t=\frac{x^{2-\alpha}}{4s}$, $s=const$ in Cases I and II are different; this is in contrast 
with the asymptotics along straight lines  $t=\frac{x}{4\xi}$, $\xi=const$ 
\cite{RS2}, which has the same form in the both cases.
\end{remark}
\begin{remark}
In the reflectionless case (i.e.,  when $b(k)= 0$ for all $k\in\mathbb R$ and 
$q(x,t)$ is a one-soliton solution (\ref{pure-sol})), the main term in  (\ref{rasc}) in Case II is equal to $A$, which is consistent with the explicit soliton formula 
(\ref{pure-sol}).
\end{remark}

\begin{figure}[ht]
	%\begin{minipage}[h]{0.99\linewidth}
	\centering{\includegraphics[scale=18.0]{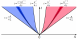}}
	\caption{The asymptotic wedges for $q(x,t)$
specified  by $\alpha\in(0,1)$ and consisting of the curves $t=\frac{|x|^{2-\alpha}}{4s}$ with $s$ varying from $0$ to $\infty$. 
Single wedges (with fixed $\alpha$ and varying $s$) are shown as darker regions.
In the left integral wedge, the solution decays to $0$;  in the right wedge, it has the form of non-vanishing oscillations.}
	\label{regions}
	%\end{minipage}
\end{figure}

Applying the nonlinear steepest decent method \cite{DZ} allows making this asymptotics more precise.
\begin{theorem}\label{th1}
Assume that the spectral functions associated with the 
the initial data $q_0(x)$ satisfy Assumptions 1-3, and let $\alpha\in(0,1)$
be fixed.
Then 
 the long-time asymptotics of the solution $q(x,t)$ of problem (\ref{1}), 
(\ref{2.0}) 
along the curve $t=\frac{x^{2-\alpha}}{4s}$ with any $s>0$ fixed 
(see Figure \ref{regions}) has the form:
\\
\textbf{Case I:}
\begin{subequations}\label{caseI}
\begin{align}\label{asIg0}
&q(x,t)=
\begin{cases}
\mathrm{Q}e^{i\Psi_I(\alpha,s,t)}+t^{\frac{\alpha}{2\alpha-4}}\sqrt{\ln t}\,F_I(\alpha,s,t)
+O\left(\frac{t^{\frac{\alpha}{2\alpha-4}}}{\sqrt{\ln t}}\right),
&x>0,\,\alpha\in\left(0,\frac{2}{3}\right),\\
\mathrm{Q}e^{i\Psi_I(\alpha,s,t)}
+O\left(t^{\frac{1-\alpha}{\alpha-2}}\ln t\right),
&x>0,\,\alpha\in\left[\frac{2}{3}, 1\right),
\end{cases}
\\
\label{asIl0}
&q(-x,t)=
\begin{cases}
O\left(t^{\frac{1}{\alpha-2}}{\ln t}\right),
&x>0,\,\alpha\in\left(0, \frac{2}{3}\right],\\
t^{\frac{4-3\alpha}{2\alpha-4}}\sqrt{\ln t}A_{I,\,3}^{as}(\alpha, s)
e^{i\Psi_{I,1}(\alpha,s,t)}
+O\left(\frac{t^{\frac{4-3\alpha}{2\alpha-4}}}{\sqrt{\ln t}}\right),
&x>0,\,\alpha\in\left(\frac{2}{3},1\right),
\end{cases}
\end{align}
\end{subequations}
\textbf{Case II:}
\begin{subequations}\label{caseII}
\begin{align}\label{asIIg0}
&q(x,t)=
\begin{cases}
\mathrm{Q}e^{\Psi_{II}(\alpha, s, t)}+
t^{\frac{\alpha}{2\alpha-4}}F_{II}(\alpha,s,t)+R_1(\alpha,t),
&x>0,\,\alpha\in\left(0,\frac{2}{3}\right),\\
\mathrm{Q}e^{\Psi_{II}(\alpha, s, t)}
+O\left(t^{\frac{1-\alpha}{\alpha-2}}\ln t\right),
&x>0,\,\alpha\in\left[\frac{2}{3},1\right),
\end{cases}\\
\label{asIIl0}
&q(-x,t)=
\begin{cases}
O\left(t^{\frac{1}{\alpha-2}}{\ln t}\right),
&x>0,\,\alpha\in\left(0, \frac{2}{3}\right],\\
t^{\frac{4-3\alpha}{2\alpha-4}}A_{II,\,3}^{as}(\alpha, s)e^{i\Psi_{II,1}(\alpha,s,t)}
+R_2(\alpha,t),
&x>0,\,\alpha\in\left(\frac{2}{3},1\right),
\end{cases}
\end{align}
\end{subequations}
where $\mathrm{Q}$ is given by (\ref{AIAII}),
 $\Psi_{I}$ and  $\Psi_{II}$ are given by (\ref{PsiI-II}),
\begin{align}\label{F_I}
&F_I(\alpha,s,t)=
A_{I,\,1}^{as}(\alpha, s)e^{i\Psi_{I,1}(\alpha,s,t)}
+A_{I,\,2}^{as}(\alpha, s)e^{i\Psi_{I,2}(\alpha,s,t)},\\
\label{F_II}
&F_{II}(\alpha,s,t)=
A_{II,\,1}^{as}(\alpha, s)e^{i\Psi_{II,1}(\alpha,s,t)}
+A_{II,\,2}^{as}(\alpha, s)e^{i\Psi_{II,2}(\alpha,s,t)},
\end{align}
with
\begin{equation}
A_{j,\,1}^{as}(\alpha, s)=-\frac{2k_1}{s}\tilde\beta^R_{j,\,as}(\alpha,s),\,
A_{j,\,2}^{as}(\alpha, s)=\frac{A_{j}^2(s)}{2k_1s}
\tilde\gamma^R_{j,\,as}(\alpha,s),\,
A_{j,\,3}^{as}(\alpha, s)=\frac{s^\frac{\alpha}{2-\alpha}}
{2^\frac{2-3\alpha}{2-\alpha}k_1}
\overline{\tilde\gamma^R_{j,\,as}(\alpha,s)},
\end{equation}
where $\tilde\beta^R_{j,\,as}$ and $\tilde\gamma^R_{j,\,as}$, $j=I,II$ are given by (\ref{tilde-be-ga-as}) and
\begin{align}
\nonumber
&&\Psi_{I,j}(\alpha,s,t)=&(-1)^{j+1}
\phi_0(\alpha, s)t^{\frac{\alpha}{2-\alpha}}
+\phi_{1j}(\alpha)\ln^24st
+(-1)^{j+1}\phi_2(\alpha)\ln 4st\cdot\ln\ln 4st\\
&& &+\phi_{3j}(\alpha,s)\ln4st
+ (-1)^{j+1}\phi_4(s)\ln\ln 4st, \quad j=1,2,\\
%\nonumber
%&&E_{I,2}(\alpha,s,t)=\exp&\{
%-i\phi_0(\alpha, s)t^{\frac{\alpha}{2-\alpha}}
%+ i\phi_{12}(\alpha)\ln^24st
%- i\phi_2(\alpha)\ln 4st\cdot\ln\ln 4st
%+i \phi_{32}(\alpha,s)\ln4st\\
%&& &- i\phi_4(s)\ln\ln 4st\}\\
&&\Psi_{II,j}(\alpha,s,t)=&(-1)^{j+1}\phi_0(\alpha, s)t^{\frac{\alpha}{2-\alpha}}
+ \phi_{5j}(\alpha)\ln 4st,\quad j=1,2.
%&&E_{II,2}(\alpha,s,t)=\exp&\{-i\phi_0(\alpha, s)t^{\frac{\alpha}{2-\alpha}}
%+ i\phi_{52}(\alpha)\ln 4st\},
\end{align}
with
\begin{subequations}\label{phij1}
\begin{align}
\label{phij}
&\phi_0(\alpha,s)=2^{\frac{2\alpha}{2-\alpha}}s^{\frac{2}{2-\alpha}},\quad
\phi_{11}(\alpha)=\frac{(1-\alpha)(1-2\alpha)}{\pi(2-\alpha)^2},\quad
\phi_{12}(\alpha)=\frac{1-\alpha}{\pi(2-\alpha)^2},\\
&\phi_{31}(\alpha,s)=\frac{1-\alpha}{\pi(2-\alpha)}
\left(
\ln\frac{1-\alpha}{\pi(2-\alpha)}+\ln\frac{2s}{A^2a_2^2(0)}
+\frac{\alpha}{\pi(1-\alpha)}\ln\frac{2s}{A|a_2(0)|}-1
\right),
\\
&\phi_{32}(\alpha,s)=\frac{1-\alpha}{\pi(2-\alpha)}
\left(
\ln\frac{\pi(2-\alpha)}{1-\alpha}+\ln\frac{8s^3}{A^2a_2^2(0)}
+\frac{\alpha}{\pi(1-\alpha)}\ln\frac{A|a_2(0)|}{2s}+1
\right),\\
&\phi_2(\alpha)=\frac{1-\alpha}{\pi(2-\alpha)},\,
\phi_4(s)=\frac{1}{\pi}\ln\frac{A|a_2(0)|}{2s},\,
\phi_{51}(\alpha)=\frac{\ln a_{11}a_{21}}{2\pi(\alpha-2)},\,
\phi_{52}(\alpha)=(4\alpha-5)\phi_{51}(\alpha),
\end{align}
\end{subequations}
and
\begin{equation}
R_1(\alpha,t)=
\begin{cases}
O\left(t^{\frac{\alpha}{\alpha-2}}\ln t\right),
&\alpha\in\left(0,\frac{1}{2}\right),\\
O\left(t^{\frac{1-\alpha}{\alpha-2}}\ln t\right),
&\alpha\in\left[\frac{1}{2},\frac{2}{3}\right),
\end{cases}
\quad
R_2(\alpha,t)=
\begin{cases}
O\left(t^{\frac{1}{\alpha-2}}\ln t\right),
&\alpha\in\left(\frac{2}{3},\frac{4}{5}\right],\\
O\left(t^{\frac{6-5\alpha}{2\alpha-4}}\sqrt{\ln t}\right),
&\alpha\in\left(\frac{4}{5},1\right).
\end{cases}
\end{equation}
\end{theorem}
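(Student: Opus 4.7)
The plan is to perform the nonlinear steepest descent analysis of the regularized RH problem for $\hat M^R$ from Proposition \ref{propsolz}, and then extract the long-time asymptotics of $q(\pm x,t)$ via the representation (\ref{sol+0z}). The signature table in Figure \ref{signtable} shows that $z=-s$ is the only stationary phase point of $\hat\theta(z,s)$ in the relevant strip, so the first step is to localize the problem there. Since $\hat J^R$ differs from $\hat J$ only through a Blaschke--Potapov conjugation that is smooth and bounded away from $z=0$ and $z=ik_1 x^{1-\alpha}$, both of which are separated from $-s$ for $s,\alpha$ fixed and $t$ large, one can replace $\hat J^R$ by $\hat J$ on a small disk $|z+s|<\rho$ up to an exponentially small error, and discard the jumps on the rest of $\hat\Gamma$ by exploiting the decay of $e^{\pm 2ix^{\alpha}\hat\theta}$ off the stationary point (with exponent $\sim c x^{\alpha}=c(4st)^{\alpha/(2-\alpha)}$).

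Next I rescale near $-s$ via $w=(z+s)(2x^{\alpha}/s)^{1/2}$, which turns $2ix^{\alpha}(\hat\theta(z,s)-\hat\theta(-s,s))$ into $iw^{2}/2$, and split off the ``slowly varying'' pieces: $\hat r_j(z)$ is replaced by $\hat r_j(-s)$, and $\hat\delta(z,s,t)=(z+s)^{i\hat\nu(-s,t)}e^{\hat\chi(z,s,t)}$ is decomposed into its singular part (absorbed into $w^{i\hat\nu}$) and its regular part evaluated at $z=-s$, using (\ref{rechi}). This yields the standard parabolic-cylinder RH problem whose solution is known explicitly in terms of $D_{i\hat\nu}(we^{\pm 3i\pi/4})$ and $\Gamma(\pm i\hat\nu)$. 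The essential novelty, and the main technical obstacle, is that the parameter $\hat\nu=\hat\nu(-s,t)$ is not constant but grows like $\tfrac{1-\alpha}{\pi(2-\alpha)}\ln 4st$ in Case I (see (\ref{nuas})). In particular, the $(1,2)$ entry of the large-$w$ expansion of the model solution has amplitude proportional to $\sqrt{2\pi}/\Gamma(-i\hat\nu)$, and Stirling on a growing imaginary argument produces the $\sqrt{\ln t}$ factor in (\ref{asIg0})--(\ref{asIl0}). Making the small-norm reduction rigorous with a parameter that itself drifts to infinity is the crux of the analysis.

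With the model solved, the coefficients $A^{as}_{j,k}$ are read off from the $1/z$ coefficient of the model solution after undoing the rescaling; the two summands in $F_{I},F_{II}$ correspond to the two exponentials $e^{\pm i w^{2}/2}$ in the parabolic-cylinder scattering relations. The phase $\phi_0(\alpha,s)t^{\alpha/(2-\alpha)}$ comes from $-2x^{\alpha}\hat\theta(-s,s)=sx^{\alpha}=2^{2\alpha/(2-\alpha)}s^{2/(2-\alpha)}t^{\alpha/(2-\alpha)}$; the $\ln^{2}4st$, $\ln4st\cdot\ln\ln 4st$, $\ln 4st$ and $\ln\ln 4st$ contributions then assemble from (i) the rescaling factor $(8x^{\alpha}/s)^{\mp i\hat\nu/2}$, (ii) the $t$-dependence of $\hat\nu$ itself, (iii) the Stirling expansion of $\Gamma(\pm i\hat\nu)$, and (iv) the ``modulated constant'' $\hat\delta^{2}(0,s,t)$ via (\ref{chiIa})--(\ref{chi0}) and (\ref{chiII}). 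The Blaschke--Potapov contribution to $q(x,t)$ through $P_{12}$ is $O(e^{-2k_1 x})$ and is absorbed in the error, but the contribution to $q(-x,t)$ through $\overline{P_{21}}$, which carries the factor $\hat c_{0}(x,t)=\tfrac{A}{2i}x^{1-\alpha}\hat\delta^{2}(0,s,t)$ in $h(x,t)$, is of algebraic order and in fact becomes the leading $A^{as}_{j,3}$ term in (\ref{asIl0}) for $\alpha>2/3$.

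Finally, the thresholds in (\ref{caseI})--(\ref{caseII}) emerge from comparing orders. The model contribution to $q(x,t)$ is of size $x^{\alpha-1}\cdot O(x^{-\alpha/2}\sqrt{\ln t})=O(t^{\alpha/(2\alpha-4)}\sqrt{\ln t})$, and equating its exponent with the background error $(1-\alpha)/(\alpha-2)$ from Proposition \ref{aschinu} yields $\alpha=2/3$; similarly for $q(-x,t)$ the exponent $(4-3\alpha)/(2\alpha-4)$ crosses $1/(\alpha-2)$ at $\alpha=2/3$. The finer thresholds $\alpha=1/2$ and $\alpha=4/5$ appearing in $R_{1}$ and $R_{2}$ will be produced by further subleading terms (second-order corrections from $\hat\chi$ and from the parabolic-cylinder expansion) trading places with the background error. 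Beyond the $t$-dependent model parameter already mentioned, the main remaining difficulty will be to obtain these subleading corrections with errors strictly smaller in order than the main terms across the full range $\alpha\in(0,1)$, which requires a careful uniform small-norm estimate for the ratio $\hat M^{R}\,(\hat M^{loc})^{-1}$ on the contour of the remaining ``error'' RH problem, supported on the arcs of $\hat\Gamma$ outside a shrinking disk around $-s$.
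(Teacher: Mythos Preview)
Your overall strategy---local parabolic-cylinder parametrix at $z=-s$, small-norm reduction, then recovery of $q(\pm x,t)$ via Proposition~\ref{propsolz}---matches the paper. The handling of the $t$-dependent $\hat\nu$ via Stirling is also the right idea. But there is a genuine gap in your understanding of the Blaschke--Potapov contribution.

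You write that ``the Blaschke--Potapov contribution to $q(x,t)$ through $P_{12}$ is $O(e^{-2k_1x})$ and is absorbed in the error.'' This is wrong, and if you proceed this way you will not recover the leading term $\mathrm{Q}e^{i\Psi}$ at all. The regularized problem in Proposition~\ref{propsolz} encodes \emph{two} residues, at $z=ik_1x^{1-\alpha}$ and at $z=0$; only the first carries the exponentially small constant $\hat c_1\sim e^{-2k_1x}$, while the second carries $\hat c_0(x,t)=\tfrac{A}{2i}x^{1-\alpha}\hat\delta^{2}(0,s,t)$, which is of \emph{algebraic} size. When you compute $g,h$ from (\ref{ghz}) with $\hat M^R\sim I$ and $\hat c_1$ discarded, you get $g_1\sim ik_1x^{1-\alpha}$, $h_1\sim \hat c_0$, $h_2\sim ik_1x^{1-\alpha}$, hence $P_{12}\sim -\tfrac{i\hat c_0}{k_1x^{1-\alpha}}$ and $-2k_1P_{12}\sim A\hat\delta^{2}(0,s,t)$. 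In other words, the entire modulated-constant leading term for $x>0$ comes \emph{from} $P_{12}$, not from the continuous-spectrum piece $2ix^{\alpha-1}\lim z\hat M^R_{12}$. The subleading corrections $F_{I},F_{II}$ then arise from the $\tilde B^R$ corrections to $\hat M^R$ evaluated at $z=0$ and $z=ik_1x^{1-\alpha}$ inside (\ref{ghz}), not just from the large-$z$ expansion.

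A related misreading appears for $q(-x,t)$: the leading $A^{as}_{j,3}$ term is not produced by $\hat c_0$ sitting inside $P_{21}$. Rather, $P_{21}\sim -\tilde B^R_{21}/(s+ik_1x^{1-\alpha})$; its leading part cancels exactly against the continuous-spectrum term $-2ix^{\alpha-1}\overline{\tilde B^R_{21}}$, and it is the \emph{next} term in the expansion of $(s+ik_1x^{1-\alpha})^{-1}$ that survives as $\tfrac{2s}{k_1}x^{2\alpha-2}\overline{\tilde B^R_{21}}$. That cancellation is what gives the exponent $\tfrac{4-3\alpha}{2\alpha-4}$ and the threshold at $\alpha=2/3$. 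So before carrying out the error bookkeeping you outline, you need to revisit the evaluation of $P_{12}$ and $P_{21}$ via (\ref{ghz})--(\ref{P}) with the full $\hat c_0$ retained.
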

\begin{remark}
The error estimates  in the asymptotic  formulas 
arise as sums of two terms: (i) the error estimate in the expansion of $\hat\nu(-x,t)$ and $\hat\chi(0,s,t)$, which is of order $O\left(t^{\frac{1-\alpha}{\alpha-2}}\ln t\right)$ (see Proposition \ref{aschinu}), and (ii) the error estimate
 in the expansion of the local parametrix (see (\ref{asparam})).
\begin{itemize}
	\item 
	For the error estimates in (\ref{asIg0}) and  (\ref{asIIg0}), 
	we have $\frac{1-\alpha}{\alpha-2}<\frac{\alpha}{2\alpha-4}$ for 
	$0<\alpha<\frac{2}{3}$ and thus the overall estimates for $\frac{2}{3}\leq\alpha<1$ are due to the (dominating) estimates for $\hat\nu$ and $\hat\chi$.   
	Expanding $\hat\nu(-s,t)$ and $\hat\chi(0,s,t)$ 
	(see (\ref{nuas}) and (\ref{chiIa})) up to $O\left(t^{\frac{n-n\alpha}{\alpha-2}}\ln t\right)$ for some $n\in\mathbb{N}$, the first decaying terms in 
	(\ref{asIg0}) and  (\ref{asIIg0}) can be specified explicitly for  all 
	$\alpha$ up to $\frac{2n}{2n+1}$.
	\item
	For the error estimates in (\ref{asIl0}) and (\ref{asIIl0}), the main contribution comes from the local parametrix (\ref{asparam})
	(notice that $\frac{4-3\alpha}{2\alpha-4}<\frac{1}{\alpha-2}$ 
	for $0<\alpha<\frac{2}{3}$ as above). 
	In order to obtain the first asymptotic terms in (\ref{asIl0}) and  (\ref{asIIl0}) for $0<\alpha\leq\frac{2}{3}$ (which is of particular interest since the smaller $\alpha$, the faster the asymptotic curve 
 approaches the ray $x=0$),   the higher order theory \cite{DZ94} is to be used 
for obtaining better estimates in the expansion of the local parametrix. 
This problem is technically involved and will be addressed elsewhere;
here we notice that   for $0<\alpha\leq\frac{2}{3}$, the solution decays apparently
as $t^{\tilde{p}(\alpha)}\ln^{\tilde{q}} t$, where $\tilde{p}(\alpha)<0$ can be written explicitly and $\tilde{q}>\frac{1}{2}$ (cf. \cite{DZ94} in the NLS case). Consequently, there should be region(s) for $0<\alpha\leq\frac{2}{3}$ with qualitatively different decaying regimes comparing with that for $\frac{2}{3}<\alpha<1$.
\end{itemize}
\end{remark}
\begin{proof}
First, introduce the rescaled spectral parameter $w$ by
\begin{equation}\label{k-z}
z=\sqrt{\frac{s}{2x^{\alpha}}}w-s,
\end{equation}
so that 
\[
e^{2ix^{\alpha}\hat \theta} = e^{\frac{iw^2}{2}-isx^{\alpha}}.
\]
Next, introduce the ``local parametrix'' $\hat m_0^R(x,t,z)$
as the solution of the RH problem with the jump matrix, 
which  is a ``simplified version of $\hat{J}^{R}(x,t,z)$''
in the sense that in its construction, $\hat r_j(z)$, $j=1,2$ are replaced by the constants $\hat r_j(-s)$,
and $\hat \delta(z,s,t)$ is replaced by
$\left(\sqrt{\frac{s}{2x^{\alpha}}}w\right)
^{i\hat\nu(-s,t)}e^{\hat\chi\left(-s,s,t\right)}$. 
This RH problem can be solved explicitly, 
in terms of the parabolic cylinder functions \cite{I1}.

Indeed, $\hat m_0^R(x,t,z)$ can be determined by 
\begin{equation}\label{m0-R}
\hat m_0^R(x,t,z)=\Delta(x,t)m^{\Gamma}(s,w(z))\Delta^{-1}(x,t),
\end{equation}
where 
\begin{equation}\label{Delta}
\Delta(x,t) = e^{\left(isx^{\alpha}/2 + \hat\chi(-s,s,t)
\right)\sigma_3}\left(\frac{2x^{\alpha}}{s}\right)^{-\frac{i\hat\nu(-s,t)}{2}\sigma_3},
\end{equation}
and $m^{\Gamma}(s,w,t)$ is determined in the same terms as in \cite{RS2}, namely,
\begin{equation}\label{m-g-0}
m^{\Gamma}(s,w,t) = m_0(s,w,t) D^{-1}_{j}(s,w,t),\qquad w\in\Omega_j,\,\,j=0,\ldots,4,
\end{equation}
see Figure \ref{mod2},
where $\gamma_j$ corresponds to $\hat\gamma_j$ in view of (\ref{k-z}) with
$D_0(s,w,t)=e^{-i\frac{w^2}{4}\sigma_3}w^{i\hat\nu(-s,t)\sigma_3}$ and
\[
\begin{aligned}
& D_1(s,w,t)=D_0(s,w,t)
\begin{pmatrix}
1& \frac{\hat r_2^R(-s)}{1+ \hat r_1^R(-s)\hat r_2^R(-s)}\\
0& 1\\
\end{pmatrix},
& \qquad & 
D_2(s,w,t)=D_0(s,w,t)
\begin{pmatrix}
1& 0\\
\hat r_1^R(-s)& 1\\
\end{pmatrix}, \\
& D_3(s,w,t)=D_0(s,w,t)
\begin{pmatrix}
1& - \hat r_2^R(-s)\\
0& 1\\
\end{pmatrix},
& &
D_4(s,w,t)=D_0(s,w,t)
\begin{pmatrix}
1& 0\\
\frac{-\hat r_1^R(-s)}{1+ \hat r_1^R(-s)\hat r_2^R(-s)}& 1
\end{pmatrix},
\end{aligned}
\]
with (see (\ref{J^Rz}))
\begin{equation}\label{r12R}
\hat r_1^R(z)= \frac{z-ik_1x^{1-\alpha}}{z}\hat r_1(z),\quad 
\hat r_2^{R}(z)=\frac{z}{z-ik_1x^{1-\alpha}}\hat r_2(z).
\end{equation}
In turn,  $m_0(s,w,t)$ is the solution of the  RH problem with a \emph{constant} 
(w.r.t. the spectral parameter $w$) jump matrix:
\begin{equation}\label{as8}
\begin{cases}
m_{0+}(s,w,t)=m_{0-}(s,w,t)j_0(s),& w\in\mathbb{R},\\
m_0(s,w,t)= \left(I+O(1/w)\right)
e^{-i\frac{w^2}{4}\sigma_3}w^{i\hat \nu(-s,t)\sigma_3},& w\rightarrow\infty,
\end{cases}
\end{equation}
with  
\begin{equation}\label{j0}
j_0(s)=
\begin{pmatrix}
1+\hat r_1^R(-s)\hat r_2^R(-s) & \hat r_2^R(-s)\\
\hat r_1^R(-s) & 1
\end{pmatrix}
\end{equation}
 (we drop the dependence on 
$\alpha$), which can be solved explicitly in terms of the parabolic cylinder 
functions \cite{I1}.

\begin{figure}[ht]
	%\begin{minipage}[h]{0.99\linewidth}
	\centering{\includegraphics[scale=0.1]{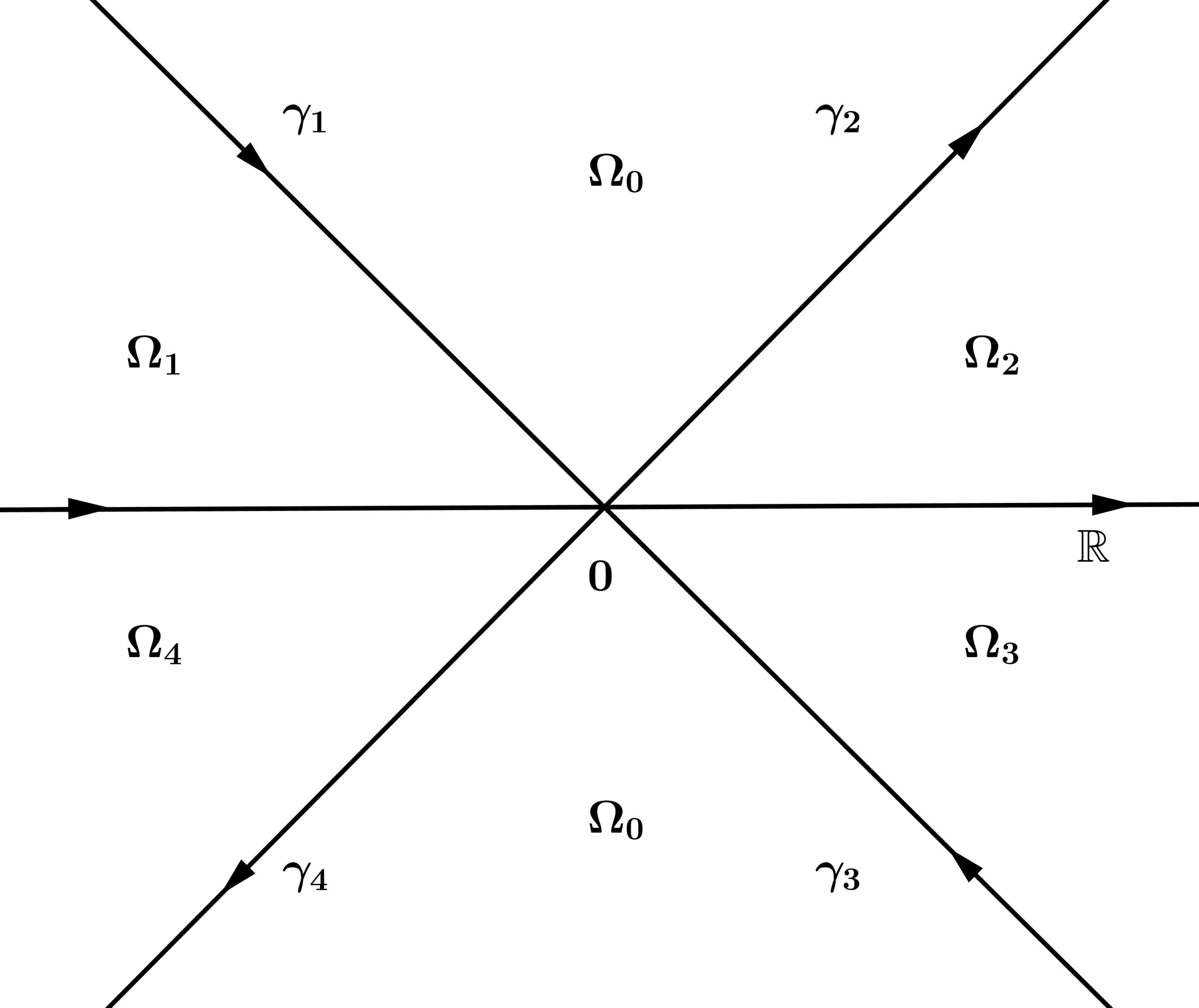}}
	\caption{Contour and domains for $m^{\Gamma}(s,w)$ in the $w$-plane  }
	\label{mod2}
	%\end{minipage}
\end{figure}

Since our goal is to obtain the long-time asymptotics, 
we only need from $m_0(s,w,t)$ (and
$m^{\Gamma}(s,w,t)$) its large-$w$ asymptotics, which has the form
\[
m^{\Gamma}(s,w,t) = I + \frac{i}{w}\begin{pmatrix}
0 & \beta^R(s,t) \\ -\gamma^R(s,t) & 0
\end{pmatrix} + O(w^{-2}), \qquad w\to \infty,
\]
where 
%(cf.  $\beta(\xi)$ and $\gamma(\xi)$ in \cite{RS}) 
\begin{subequations}\label{be-ga-Rz}
\begin{align}
\beta^R(s,t)=\dfrac{\sqrt{2\pi}e^{-\frac{\pi}{2}\hat\nu(-s,t)}
e^{-\frac{3\pi i}{4}}}{\hat r_1^R(-s)\Gamma(-i\hat\nu(-s,t))},\\
\gamma^R(s,t)=\dfrac{\sqrt{2\pi}e^{-\frac{\pi}{2}\hat\nu(-s,t)}
e^{-\frac{\pi i}{4}}}{\hat r_2^R(-s)\Gamma(i\hat\nu(-s,t))}.
\end{align}
\end{subequations}

Now, having defined the parametrix $\hat m_0^R(x,t,z)$,
we define $\check M^{R}(x,t,z)$ 
%(cf.  $\hat{m}(x,t,k)$ in \cite{RS}) 
as follows:
\[
\check M^R(x,t,z) = 
\begin{cases}
\hat M^R(x,t,z)(\hat m_0^{R})^{-1}(x,t,z), & |z+s|<\varepsilon, \\
\hat M^R(x,t,z), & \mbox{otherwise},
\end{cases}
\]
where $\varepsilon$ is small enough so that $|s|>\varepsilon$ and $|ik_1x^{1-\alpha}+s|>\varepsilon$. Then the sectionally analytic matrix $\check M^{R}$ has the following jumps across  
$\hat\Gamma_1=\hat\Gamma\cup\{|z+s|=\varepsilon\} $ (the circle 
$|z+s|=\varepsilon $ is  oriented counterclockwise)
\begin{equation}\label{check-J}
\check J^R(x,t,z) = 
\begin{cases}
\hat m_{0-}^R(x,t,z) \hat J^R(x,t,z)(\hat m_{0+}^R)^{-1} (x,t,z)
, & z\in \hat\Gamma, |z+s|<\varepsilon, \\
\left(\hat m_{0}^{R}\right)^{-1} (x,t,z), & |z+s|=\varepsilon, \\
\hat J^R(x,t,z), & \text{otherwise}.
\end{cases}
\end{equation}

Using the representation of $\check M^{R}(x,t,z)$ in terms of the solution of the  singular integral equation, we obtain its long time behavior (see (\ref{M^R}) below). 
The integral representation has the form:
\begin{equation}\label{M-int-rep}
\check M^{R}(x,t,z) = I+\frac{1}{2\pi i}\int_{\hat\Gamma_1}\mu(x,t,\zeta)(\check J^{R}(x,t,\zeta)-I)\frac{d\zeta}{\zeta-z},
\end{equation}
where $\mu$ solves the integral equation $\mu -C_u \mu = I$, with 
$u=\check J^{R} - I$.
Here the Cauchy-type operator $C_u$ is defined by 
$C_u f = C_-(fu)$, where 
$(C_-h)(z)$, $z\in \hat\Gamma_1$ are the right (according to the orientation of 
$\hat\Gamma_1$) non-tangential boundary values of 
\[
(Ch)(z')=\frac{1}{2\pi i}\int_{\hat\Gamma_1}\frac{h(\zeta)}{\zeta-z'}\,d\zeta, \quad z'\in{\mathbb C}\setminus
\hat\Gamma_1.
\]
Since $\Im\hat\nu(-s,t)=o(1)$ as $t\to\infty$ (see (\ref{nuas})),
it follows that  $\Im\hat\nu(-s,t)$ does not affect the power exponent in the decaying term, which is in contrast with the case of constant $\xi$ \cite{RS, RS2}. Thus the large-$t$ asymptotics of $(\hat m_0^{R})^{-1}$ has the form:
\begin{align}
\nonumber
(\hat m_0^{R})^{-1}(x,t,z)&=\Delta(x,t)
\left(m^{\Gamma}\right)^{-1}(s,\sqrt{2x^{\alpha}/s}(z+s),t)\Delta^{-1}(x,t)\\
\label{asparam}
&=I+\frac{\sqrt{s}B^{R}(x,t)}{\sqrt{2(4st)^{\alpha/(2-\alpha)}}(z+s)}
%+\frac{O\left(t^{\frac{\alpha}{\alpha-2}}\right)}{(z+s)^2},
+O\left(t^{\frac{\alpha}{\alpha-2}}\right),
\end{align}
where 
\begin{equation}
B^R(x,t)=\begin{pmatrix}
0 & -i\beta^R(s,t)e^{isx^{\alpha} + 2\hat\chi(-s,s,t)}
\left(\frac{2x^{\alpha}}{s}\right)^{-i\hat\nu(-s,t)} \\
i\gamma^R(s,t)e^{-isx^{\alpha} - 2\hat\chi(-s,s,t)}
\left(\frac{2x^{\alpha}}{s}\right)^{i\hat\nu(-s,t)} & 0
\end{pmatrix}.
\end{equation}
%Reasoning as in \cite{RS}
Estimations of the jump matrix $(\check J^{R}-I)$ and the Cauchy operator
similar to those  in  \cite{RS} (see also \cite{Len15})
imply  that the main term in the large-$t$ development 
of $\check M^{R}$ in (\ref{M-int-rep}) is given by the integral along the circle
$|\zeta+s|=\varepsilon$, which in turn gives
\begin{equation}\label{M^R}
\check M^R(x,t,z)=I-\frac{1}{2\pi i}\int_{|\zeta+s|=\varepsilon}
\frac{\tilde{B}^{R}(\alpha,s,t)}{(\zeta+s)(\zeta-z)}
%+\frac{O\left(t^{\frac{\alpha}{\alpha-2}}\ln t\right)}{(\zeta+s)^2(\zeta-z)}
\,d\zeta+O\left(t^{\frac{\alpha}{\alpha-2}}\ln t\right),
\quad |\zeta+s|>\varepsilon,
\end{equation}
where
\begin{subequations}\label{tilde-Bz}
\begin{align}
&\tilde{B}^R_{11}(\alpha,s,t)=\tilde{B}^R_{22}(\alpha,s,t)=0,\\
&\tilde{B}^R_{12}(\alpha,s,t)=\tilde\beta^{R}(\alpha,s,t)
\exp\left\{is(4st)^{\frac{\alpha}{2-\alpha}}
-i\alpha\frac{\hat\nu(-s)}{2-\alpha}\ln 4st\right\}t^{\frac{\alpha}{2\alpha-4}},\\
&\tilde{B}^R_{21}(\alpha,s,t)=\tilde\gamma^{R}(\alpha,s,t)
\exp\left\{-is(4st)^{\frac{\alpha}{2-\alpha}}
+i\alpha\frac{\hat\nu(-s)}{2-\alpha}\ln 4st\right\}t^{\frac{\alpha}{2\alpha-4}},
\end{align}
\end{subequations}
with
\begin{subequations}\label{be-ga-Rz-tilde}
%\nonumber
\begin{align}
&\tilde\beta^{R}(\alpha,s,t)=i\beta^{R}(s,t)\exp\left\{i\hat\nu(-s,t)
\ln\frac{s}{2}
+\frac{1-\alpha}{2-\alpha}\ln s+\frac{\alpha+2}{2\alpha-4}\ln 2+2\hat\chi(-s,s,t)
\right\},\\
&\tilde\gamma^{R}(\alpha, s,t)=-i\gamma^{R}(s,t)\exp\left\{-i\hat\nu(-s,t)
\ln\frac{s}{2}
+\frac{1-\alpha}{2-\alpha}\ln s+\frac{\alpha+2}{2\alpha-4}\ln 2-2\hat\chi(-s,s,t)
\right\}.
\end{align}
\end{subequations}
Particularly, since $\check M^{R}=\hat M^{R}$ for all $z$ with
$|z+s|>\varepsilon$, we have 
\begin{equation}\label{kinftyz}
\lim\limits_{z\to\infty}z\left(\hat M^{R}(x,t,z)-I\right)=
\tilde{B}^{R}(\alpha,s,t)+O\left(t^{\frac{\alpha}{\alpha-2}}\ln t\right),
\end{equation}
as well as
\begin{subequations}\label{MR0z}
\begin{align}
\hat M^R(x,t,0)=I+\frac{\tilde B^{R}(\alpha,s,t)}{s}
+O\left(t^{\frac{\alpha}{\alpha-2}}\ln t\right),\\
\label{MRik_1z}
\hat M^R(x,t,ik_1x^{1-\alpha})=I+\frac{\tilde B^{R}(\alpha,s,t)}
{s+ik_1x^{1-\alpha}}+O\left(t^{\frac{\alpha}{\alpha-2}}\ln t\right).
\end{align}
\end{subequations}

Now observe that  $P_{12}(x,t)$ and $P_{21}(x,t)$  (see (\ref{sol+0z})) can be evaluated similarly to \cite{RS2}, with $k_1x^{1-\alpha}$, $\hat c_0(x,t)$ and $s$ replacing  $k_1$, $c_0(\xi)$ and $\xi$ in \cite{RS2} respectively (we drop the arguments of $\tilde{B}^{R}$ and use the standard notation for its entries):
\begin{align}
\nonumber
g_1(x,t)&=ik_1x^{1-\alpha}+O\left(t^{\frac{\alpha}{\alpha-2}}\ln t\right),
&&g_2(x,t)=\frac{ik_1x^{1-\alpha}}{s+ik_1x^{1-\alpha}}\tilde{B}^{R}_{21}
+O\left(t^{\frac{\alpha}{\alpha-2}}\ln t\right),\\
\nonumber
h_1(x,t)&=\hat c_0(x,t)+\frac{ik_1}{s}x^{1-\alpha}\tilde{B}^{R}_{12}
+O\left(t^{\frac{\alpha}{\alpha-2}}\ln t\right),
&&h_2(x,t)=ik_1x^{1-\alpha}+\frac{\hat c_0(x,t)}{s}\tilde{B}^{R}_{21}
+O\left(t^{\frac{\alpha}{\alpha-2}}\ln t\right).
\end{align}
Consequently,  we have (dropping again the arguments)
\begin{subequations}\label{gh2}
\begin{align}
g_1h_1&=ik_1x^{1-\alpha}\hat c_0-\frac{k_1^2x^{2-2\alpha}}{s}\tilde{B}^{R}_{12}
+\tilde R(\alpha,t),
&&g_1h_2=-k_1^2x^{2-2\alpha}+\frac{ik_1x^{1-\alpha}\hat c_0}
{s}\tilde{B}^{R}_{21}
+\tilde R(\alpha,t),\\
g_2h_1&=\frac{ik_1x^{1-\alpha}\hat c_0}{s+ik_1x^{1-\alpha}}\tilde{B}^{R}_{21}
+O\left(t^{\frac{2\alpha-1}{\alpha-2}}\tilde\beta^R\tilde\gamma^R\right),
&&g_2h_2=-\frac{k_1^2x^{2-2\alpha}}{s+ik_1x^{1-\alpha}}\tilde{B}^{R}_{21}
+\tilde R(\alpha,t).
\end{align}
\end{subequations}
where $\tilde R(\alpha,t)=O\left(t^{\frac{2\alpha-1}{\alpha-2}}\ln t\right)$. Substituting (\ref{gh2}) into (\ref{P}), and making a rough estimation 
$O\left(t^{\frac{2\alpha-1}{\alpha-2}}\tilde\beta^R\tilde\gamma^R\right)=
O\left(t^{\frac{2\alpha-1}{\alpha-2}}\ln t\right)$
(see (\ref{tilde-be-ga-R})) we obtain

\begin{subequations}\label{P-asz}
\begin{align}
P_{12}(x,t)&=-\frac{i\hat c_0(x,t)}{k_1x^{1-\alpha}}
+\frac{\tilde{B}^R_{12}(\alpha,s,t)}{s}
+\frac{i\hat c_0^2(x,t)}{s k_1x^{1-\alpha}(s+ik_1x^{1-\alpha})}
\tilde{B}^R_{21}(\alpha,s,t)+O\left(t^{\frac{\alpha}{\alpha-2}}\ln t\right),\\
P_{21}(x,t)&=-\frac{\tilde{B}^R_{21}(\alpha,s,t)}{s+ik_1x^{1-\alpha}}
+O\left(t^{\frac{1}{\alpha-2}}\ln t\right).
\end{align}
\end{subequations}
Collecting (\ref{sol+0z}), (\ref{kinftyz}), (\ref{P-asz}) and taking into account that $x^{\alpha-1}\tilde{B}^R=O\left(t^{-\frac{1}{2}}\sqrt{\ln t}\right)$ and $\hat c_0(x,t)=\frac{A}{2i}x^{1-\alpha}\hat\delta^2(0,s,t)$ we get
\begin{subequations}\label{gen-as}
\begin{align}
&q(x,t)=A\hat\delta^2(0,s,t)+\frac{A^2}{2k_1s}\hat\delta^4(0,s,t)\tilde B_{21}^{R}(\alpha,s,t)-\frac{2k_1}{s}\tilde B_{12}^{R}(\alpha,s,t)
+\hat R_1(\alpha,t), &&x>0,
\\
&q(-x,t)=\frac{2s}{k_1}x^{2\alpha-2}\overline{\tilde{B}_{21}^{R}(\alpha,s,t)}
+\hat R_2(\alpha,t), && x>0,
\end{align}
\end{subequations}
where (notice that roughly $\tilde\gamma^{R}=O(\sqrt{\ln t})$ for both Case I and Case II, see (\ref{ga-Rz-tilde}))
\begin{equation}
\hat R_1(\alpha,t)=
\begin{cases}
O\left(t^{-\frac{1}{2}}\sqrt{\ln t}\right),
&\alpha\in\left(\frac{2}{3},1\right),\\
O\left(t^{\frac{\alpha}{\alpha-2}}\ln t\right),
&\alpha\in\left(0,\frac{2}{3}\right].
\end{cases}
\hat R_2(\alpha,t)=
\begin{cases}
O\left(t^{\frac{6-5\alpha}{2\alpha-4}}\sqrt{\ln t}\right),
&\alpha\in\left(\frac{4}{5},1\right),\\
O\left(t^{\frac{1}{\alpha-2}}\ln t\right),
&\alpha\in\left(0,\frac{4}{5}\right].
\end{cases}
\end{equation}
Finally, taking into account (\ref{tilde-Bz}), (\ref{gen-as}), the asymptotics of $\hat\nu$ and $\hat\chi$ given by Proposition \ref{aschinu} and the asymptotics of $\tilde\beta^R$ and $\tilde\gamma^R$ given by  Proposition \ref{begaas}, the statements of Theorem \ref{th1} follow.
\end{proof}

\section{Matching with the asymptotics along straight lines}
\label{merge}
In this section we  compare the asymptotics presented in 
 Theorem \ref{th1} with those obtained in \cite{RS2} for the case of 
constant $\xi=\frac{x}{4t}$. To do this, we consider the asymptotics along the curves $t=\frac{x^{2-\alpha}}{4s}$, $s>0$ obtained in Theorem \ref{th1}, and take the limit $\alpha\to1$ (see Figure \ref{regions}). First, we consider asymptotics of $q(x,t)$, $x>0$, which takes the form:
\begin{equation}\label{a1as0}
q(x,t)\sim 
\begin{cases}
\mathrm{Q}e^{i\Psi_I(1, s, t)}, & \mbox{Case I},\\
\mathrm{Q}e^{i\Psi_{II}(1, s, t)}, & \mbox{Case II}.
\end{cases}
\end{equation}
Taking into account that $\delta(0,\xi)$ (see (\ref{xidel})) has the following form as $\xi\to +0$:
\begin{equation}\label{asdelta}
\delta(0,\xi)\sim
\begin{cases}
\exp\left\{
\frac{i}{\pi}\ln\xi\cdot\ln\frac{A2|a_2(0)|}{2\xi}+\hat\chi_0(\xi)\right\},
& \mbox{Case I},
%+\frac{i}{2\pi}\int_{-\infty}^{-1}\ln(-\zeta)\,d_{\zeta}\ln(1+r_1(\zeta)r_2(\zeta))
%+\frac{i}{2\pi}\int_{-1}^{-\xi}\ln(-\zeta)\,d_{\zeta}\ln\frac{1+r_1(\zeta)r_2(\zeta)}
%{\zeta^2}
\\
\exp\left\{\frac{i}{2\pi}\ln\xi\cdot\ln a_{11} a_{21}+\hat\chi_1\right\},
&\mbox{Case II},
\end{cases}
\end{equation}
we conclude that (\ref{a1as0}) are the same as the main terms in \cite{RS2} as $\xi\to+0$, with $s$ replacing $\xi$.

Now let us consider the asymptotics of $q(-x,t)$ for $x>0$ and $\alpha\to 1$:
\begin{equation}\label{a1as1}
q(-x,t)\sim 
\begin{cases}
t^{-\frac{1}{2}}\sqrt{\frac{1-\alpha}{\pi(2-\alpha)}\ln t}\cdot
\frac{C_1(s)\exp\{4is^2t\}}
{\exp\left\{-\frac{i}{\pi}\ln\frac{A|a_2(0)|}{2s}
\ln\frac{1-\alpha}{\pi(2-\alpha)}\right\}}, & \mbox{Case I},\\
t^{-\frac{1}{2}}\hat\alpha_1\exp\{4its^2-i\nu(0)\ln t\}, & \mbox{Case II},
\end{cases}
\end{equation}
where
$
C_1(s)=\frac{4i}{A}s\exp\{\frac{i}{\pi}\ln\frac{A|a_2(0)|}{2s}\ln\frac{s}{2}
-\frac{3}{2}\ln 2-2\overline{\hat\chi_{-s}(s)}\}
$
and
\begin{equation}\label{hatal}
\hat\alpha_1=-\frac{\sqrt{\pi}\exp\{-\frac{\pi}{2}\nu(0)+\frac{\pi i}{4}-2\overline{\hat\chi_1}-3i\nu(0)\ln 2\}}
{\frac{b(0)}{sa_{21}}\Gamma(-i\nu(0))},
\end{equation}
with
$
\nu(0)=\frac{1}{2\pi}\ln a_{11}a_{21}.
$
Assuming that $(1-\alpha)\ln t=O(1)$ as $\alpha\to 1$ and $t\to\infty$, the asymptotics (\ref{a1as1}) in Case I has the form $|q(x,t)|=C(\alpha,s)t^{-1/2}$, which is consistent with that obtained in \cite{RS2}. Then, straightforward calculations show that (\ref{a1as1}) in  Case II is precisely the same as in \cite{RS2}, with $s$ replacing $\xi$, since $\hat\alpha_1$ and $\nu(0)$ are consistent with respectively $\alpha_1(\xi)$ and $\nu(-\xi)$ defined in \cite{RS2} in  Case II with $\xi=0$  (we have the minus sign at  the r.h.s. of (\ref{hatal}) since here we do not change $x$ to $-x$ for $x<0$).
%If $(1-\alpha)\ln t=O(1)$ as $\alpha\to1-0$ and $t\to\infty$, then $|q(x,t)|=Ct^{-\frac{1}{2}}$, $x<0$ in (\ref{asIal1b}) which is, again, consistent with \cite{RS2}.

%%%%%%%%%%%%%%%%%%%%%%%%%%%%%%%%%%%%%%%%%%%%%%%%%%%%%%%%%%%%%%%%%%%%%%

\bigskip

\noindent\textbf{Acknowledgments.}
D.S. gratefully acknowledges the hospitality of the University of Vienna
and the partial support of the Austrian Science Found (Grant FWF no. P31651). Ya.R. thankfully acknowledges the partial support from the Akhiezer Foundation.

%\appendix
\section*{Appendix A}
\label{appA}
\textit{Proof of  Proposition \ref{aschinu}}.

The behavior of $(1+r_1(k)r_2(k))$ as $k\to0$:
\begin{equation}\label{r1r20}
1+r_1(k)r_2(k)=
\begin{cases}
\frac{4k^2}{A^2a_2^2(0)} + O(k^3), &\mbox{Case I},\\
\frac{1}{a_{11}a_{21}} + O(k), &\mbox{Case II},
\end{cases}
\end{equation}
implies, by straightforward calculations,  that $\hat\nu(-s,t)$ has asymptotic behavior described by (\ref{nuas}) (recall that $a_{11}a_{21}>0$ in  Case II due to the Assumption 2).

Taking into account the definition of $\hat r_j(z)$ (see (\ref{r12z})) and factoring out the term $x^{1-\alpha}$ in the logarithm, the function $\hat\chi(z,s,t)$ defined by (\ref{chi}) can be written as follows (recall that $\alpha\in(0,1)$ and $s>0$):
\begin{align}
\nonumber
&\hat\chi(z,s,t)=-\frac{1}{2\pi i}
\int_{-\infty}^{-sx^{\alpha-1}}\ln(z-x^{1-\alpha}\zeta)\,
d_{\zeta}\ln(1+r_1(\zeta)r_2(\zeta))\\
\label{chising1}
&=\frac{\alpha-1}{2\pi i}\ln x\cdot\ln(1+r_1(-sx^{\alpha-1})r_2(-sx^{\alpha-1}))
-\frac{1}{2\pi i}\int_{-\infty}^{-sx^{\alpha-1}}\ln(zx^{\alpha-1}-\zeta)\,
d_{\zeta}\ln(1+r_1(\zeta)r_2(\zeta)).
\end{align}
The real part of the first term in (\ref{chising1}) can be estimated as
 $O\left(t^{\frac{1-\alpha}{\alpha-2}}\ln t\right)$, $t\to\infty$ (see (\ref{r1r20}) and  (\ref{arg0})), whereas integrating by parts we conclude that the real part of the second term is the r.h.s. of (\ref{rechi}).

Now let us evaluate $\hat\chi(z,s,t)$ at $z=0$ and $z=-s$ in  Cases I and II. In  Case II, the function $\ln(1+r_1(k)r_2(k))$ is bounded as $k\to 0$, so for all $z\geq-s$ we have the asymptotics (\ref{chiII}).

In  Case I, the function $\ln(1+r_1(k)r_2(k))$ has a singularity at $k=0$
and thus a neighborhood of $\zeta=-sx^{\alpha-1}$ in (\ref{chising1}) 
is to be treated separately. For all $z\geq-s$ we have:
\begin{align}
\nonumber
&\int_{-1+zx^{\alpha-1}}^{-sx^{\alpha-1}}
\ln(zx^{\alpha-1}-\zeta)\,d_{\zeta}\ln(1+r_1(\zeta)r_2(\zeta))
\\&=
\nonumber
\int_{-1+zx^{\alpha-1}}^{-sx^{\alpha-1}}
\ln(zx^{\alpha-1}-\zeta)\,d_{\zeta}\ln\frac{1+r_1(\zeta)r_2(\zeta)}{\zeta^2}
+2\int_{-1+zx^{\alpha-1}}^{-sx^{\alpha-1}}
\ln(zx^{\alpha-1}-\zeta)\,d_{\zeta}\ln(-\zeta)
\\&=
\label{neigh0chi}
\int_{-1}^{0}
\ln(-\zeta)\,d_{\zeta}\ln\frac{1+r_1(\zeta)r_2(\zeta)}{\zeta^2}
+2\int_{-1+zx^{\alpha-1}}^{-sx^{\alpha-1}}
\ln(zx^{\alpha-1}-\zeta)\,d_{\zeta}\ln(-\zeta) + O\left(t^{\frac{1-\alpha}{\alpha-2}}\ln t\right).
\end{align}
Collecting (\ref{chising1}) and (\ref{neigh0chi}) with $z=0$ we arrive at (\ref{chiIa}).

For $z=-s$, the last term in (\ref{neigh0chi}) has the form (here we use the notation 
$y:=-sx^{\alpha-1}$):
\begin{align}
\nonumber
2\int_{-1+y}^{y}\ln(y-\zeta)\,\frac{d\zeta}{\zeta}&=\ln^2 y
+2\int_{-1+y}^{y}\frac{\ln\left(1-\frac{y}{\zeta}\right)}{\zeta}\,d\zeta+O(y),
=\ln^2 y-2\int_{-1+y}^{y}\frac{\sum_{n=1}^{\infty}\frac{y^n}{n\zeta^n}}{\zeta}+O(y)
\\
\label{neigh0chi1}
&=\ln^2 y+2\sum_{n=1}^{\infty}\frac{1}{n^2}+O(y)
\quad y\to-0.
\end{align}
Observing that $\sum_{n=1}^{\infty}\frac{1}{n^2}=\frac{\pi^2}{6}$ and combining (\ref{chising1}), (\ref{neigh0chi}) with $z=-s $, and (\ref{neigh0chi1}), we obtain (\ref{chiIb}).

\section*{Appendix B}\label{appB}
\begin{proposition}
The long-time asymptotics of $\beta^{R}(s,t)$ and $\gamma^{R}(s,t)$ (see (\ref{be-ga-Rz})) have the form:
\begin{align}
&\beta^{R}(s,t)=\begin{cases}
\frac{A\sqrt{\ln t}\exp
\{i\phi_2(\alpha)\ln 4st\cdot\ln\ln 4st+i\tilde\phi_3(\alpha)\ln 4st
+i\phi_4(s)\ln\ln 4st\}}
{2k_1\exp\left\{\left(\frac{i}{\pi}\ln\frac{2s}{A|a_2(0)|}-\frac{1}{2}\right)	
	\ln\frac{1-\alpha}{\pi(2-\alpha)}\right\}
}+O\left(\frac{1}{\sqrt{\ln t}}\right),&\mbox{Case I},\\
\frac{\sqrt{2\pi}a_{11}^{3/4}a_{21}^{-1/4}e^{-\frac{\pi i}{4}}}
{k_1b(0)\Gamma\left(-\frac{i}{2\pi}\ln a_{11}a_{21}\right)}
+O\left(t^{\frac{\alpha-1}{2-\alpha}}\right), &\mbox{Case II},
\end{cases}
\\
&\gamma^{R}(s,t)=\begin{cases}
\frac{2k_1\sqrt{\ln t}\exp
\{-i\phi_2(\alpha)\ln 4st\cdot\ln\ln 4st-i\tilde\phi_3(\alpha)\ln 4st
-i\phi_4(s)\ln\ln 4st\}}
{A\exp\left\{\left(\frac{i}{\pi}\ln\frac{A|a_2(0)|}{2s}-\frac{1}{2}\right)	
	\ln\frac{1-\alpha}{\pi(2-\alpha)}\right\}
}+O\left(\frac{1}{\sqrt{\ln t}}\right),&\mbox{Case I},\\
\frac{\sqrt{2\pi}k_1a_{11}^{-1/4}a_{21}^{3/4}e^{-\frac{3\pi i}{4}}}
{\overline{b(0)}\Gamma\left(\frac{i}{2\pi}\ln a_{11}a_{21}\right)}
+O\left(t^{\frac{\alpha-1}{2-\alpha}}\right), &\mbox{Case II},
\end{cases}
\end{align}
where $\phi_2$ and $\phi_4$ are given by (\ref{phij}), and 
\begin{equation}\label{tildephi3}
\tilde\phi_3(\alpha)=\frac{1-\alpha}{\pi(2-\alpha)}
\left(\ln\frac{1-\alpha}{\pi(2-\alpha)}-1\right),\\
\end{equation}
\end{proposition}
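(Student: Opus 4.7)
The plan is to derive both asymptotics directly from the closed-form definitions in (\ref{be-ga-Rz}) by combining three inputs: (i) the large-$t$ expansion of $\hat\nu(-s,t)$ from Proposition \ref{aschinu}; (ii) the large-$t$ behavior of $\hat r_j^R(-s)$, obtained from the local behavior of $r_j(k)$ near $k=0$ (Items 2 and 5 of Proposition \ref{properties}) together with the definition (\ref{r12R}) and the fact that $x^{1-\alpha}=(4st)^{(1-\alpha)/(2-\alpha)}\to\infty$; and (iii) Stirling's formula applied to $\Gamma(\mp i\hat\nu)$, whose regime (large vs.\ bounded argument) differs sharply between the two cases.

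For Case I, Item 5 of Proposition \ref{properties} gives $r_1(k)=-2ik/A+O(k^2)$ and $r_2(k)=-iA/(2k)+O(1)$ near $k=0$. Substituting $k=-sx^{\alpha-1}$ into these and into the Blaschke factors in (\ref{r12R}), with the ratio $ik_1 x^{1-\alpha}/s$ dominating the constant factor $1$ for large $t$, I will show that $\hat r_1^R(-s)=-2k_1/A+O(t^{(\alpha-1)/(2-\alpha)})$ and $\hat r_2^R(-s)=A/(2k_1)+O(t^{(\alpha-1)/(2-\alpha)})$. Writing $\hat\nu(-s,t)=\nu_1 L+\nu_0+O(t^{(\alpha-1)/(2-\alpha)})$ with $\nu_1=\phi_2(\alpha)$, $\nu_0=\phi_4(s)$, and $L=\ln(4st)$ from (\ref{nuas}), Stirling's formula yields
\begin{equation*}
\Gamma(-i\hat\nu)=\sqrt{2\pi}\,\hat\nu^{-1/2}e^{i\pi/4}\exp\{-i\hat\nu\ln\hat\nu+i\hat\nu-\pi\hat\nu/2\}(1+O(\hat\nu^{-1})).
\end{equation*}
Expanding $\hat\nu\ln\hat\nu-\hat\nu$ via $\ln\hat\nu=\ln\nu_1+\ln L+O(L^{-1})$ gives the decomposition
\begin{equation*}
\hat\nu\ln\hat\nu-\hat\nu=\phi_2(\alpha)L\ln L+\tilde\phi_3(\alpha)L+\phi_4(s)\ln L+\phi_4(s)\ln\nu_1+o(1),
\end{equation*}
with $\tilde\phi_3(\alpha)=\nu_1(\ln\nu_1-1)$ matching (\ref{tildephi3}). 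Substitution into the definition of $\beta^R$ cancels the two $e^{-\pi\hat\nu/2}$ factors, produces the amplitude $\hat\nu^{1/2}/|\hat r_1^R(-s)|=\frac{A}{2k_1}\sqrt{\nu_1\ln t}(1+O((\ln t)^{-1}))$, the unimodular factor $-ie^{-3\pi i/4-i\pi/4}=-1$, and the residual constant phase $e^{i\phi_4(s)\ln\nu_1}$; these combine exactly into the stated formula, and the computation for $\gamma^R$ is identical with conjugated signs.

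For Case II, (\ref{nuas}) gives $\hat\nu(-s,t)=\nu(0)+O(t^{(\alpha-1)/(2-\alpha)})$ with $\nu(0)=(2\pi)^{-1}\ln a_{11}a_{21}$ bounded, so $\Gamma(\mp i\hat\nu)=\Gamma(\mp i\nu(0))(1+O(t^{(\alpha-1)/(2-\alpha)}))$ and no Stirling expansion is needed. The local expansions $r_1(k)\sim b(0)k/a_{11}$ and $r_2(k)\sim\overline{b(0)}/(a_{21}k)$ near $k=0$ (from $a_1\sim a_{11}/k$, $a_2\sim a_{21}k$, $b(0)\ne 0$), combined with the dominant Blaschke factor $ik_1 x^{1-\alpha}/s$, make the $x^{\pm(1-\alpha)}$ powers cancel and yield $\hat r_1^R(-s)\to -ik_1 b(0)/a_{11}$ and $\hat r_2^R(-s)\to i\overline{b(0)}/(k_1 a_{21})$. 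The identity $e^{-\pi\nu(0)/2}=(a_{11}a_{21})^{-1/4}$ together with $ie^{-3\pi i/4}=e^{-\pi i/4}$ (resp.\ $(-i)e^{-\pi i/4}=e^{-3\pi i/4}$) converts the prefactor into the exact constant claimed for each of $\beta^R$ and $\gamma^R$.

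The principal technical obstacle is the error book-keeping in Case I: three sources of error ($\hat\nu$ itself of order $O(t^{(\alpha-1)/(2-\alpha)})$, Stirling of order $O((\ln t)^{-1})$, and the local expansion of $r_j$ at zero) all feed multiplicatively into the exponentials $e^{i\hat\nu\ln\hat\nu}$ and $e^{-i\hat\nu}$, so small perturbations of $\hat\nu$ can produce potentially unbounded phase corrections. One must verify that these combine to give only an $O((\ln t)^{-1/2})$ remainder, and simultaneously that the sub-leading $\nu_0$ term in $\hat\nu$ contributes precisely the constant phase $e^{i\phi_4(s)\ln\nu_1}$ which matches the denominator exponent $(\frac{i}{\pi}\ln\frac{2s}{A|a_2(0)|}-\frac{1}{2})\ln\nu_1$ in the statement, with no additional hidden constant.
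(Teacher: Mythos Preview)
Your approach is essentially identical to the paper's: both compute the limits of $\hat r_j^R(-s)$ from the local expansions of $r_j(k)$ at $k=0$, invoke the asymptotics of $\hat\nu(-s,t)$ from Proposition~\ref{aschinu}, apply Stirling's formula to $\Gamma(\mp i\hat\nu)$ (trivially in Case~II), and assemble the pieces. The only blemish is the phase bookkeeping in Case~I: the unimodular contribution from $e^{-3\pi i/4}/(e^{i\pi/4}\hat r_1^R(-s))$ is $(-1)\cdot e^{-i\pi}=+1$, not the ``$-ie^{-3\pi i/4-i\pi/4}=-1$'' you wrote, so double-check that step when you write it out in full.
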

\begin{proof}
First, observe that since
\begin{equation}\nonumber
r_1(k)=
\begin{cases}
\frac{2k}{iA} + O(k^2),\,k\to 0,&\mbox{Case I},\\
\frac{b(0)k}{a_{11}} + O(k^2),\,k\to 0,&\mbox{Case II},
\end{cases}
\,\,
r_2(k)=
\begin{cases}
\frac{A}{2ik} + O(1),\,k\to 0,&\mbox{Case I},\\
\frac{\overline{b(0)}}{a_{21}k} + O(1),\,k\to 0,&\mbox{Case II},
\end{cases}
\end{equation}
the large-$t$ behavior of $\hat r^R_j(-s,t)$, $j=1,2$ (see (\ref{r12z}) and (\ref{r12R})) has the form
\begin{equation}\label{rjas}
\hat r^R_1(-s)=
\begin{cases}
\frac{-2k_1}{A} + O\left(t^{\frac{\alpha-1}{2-\alpha}}\right),
&\mbox{Case I},\\
\frac{-ik_1b(0)}{a_{11}} + O\left(t^{\frac{\alpha-1}{2-\alpha}}\right),
&\mbox{Case II},
\end{cases}
\hat r^R_2(-s,t)=
\begin{cases}
\frac{A}{2k_1} + O\left(t^{\frac{\alpha-1}{2-\alpha}}\right),
&\mbox{Case I},\\
\frac{i\overline{b(0)}}{k_1a_{21}} + O\left(t^{\frac{\alpha-1}{2-\alpha}}\right),
,&\mbox{Case II}.
\end{cases}
\end{equation}
Second, the asymptotics of $e^{-\frac{\pi}{2}\hat\nu(-s,t)}$ has the form (see (\ref{nuas}))
\begin{equation}\label{enuas}
e^{-\frac{\pi}{2}\hat\nu(-s,t)}=
\begin{cases}
t^{\frac{\alpha-1}{4-2\alpha}}
\exp\left\{\frac{\alpha-1}{4-2\alpha}\ln 4s-\frac{1}{2}\ln\frac{A|a_2(0)|}{2s}\right\}
 + O\left(t^{\frac{3\alpha-3}{4-2\alpha}}\right),
\quad&\mbox{ Case I},\\
\left(a_{11}a_{21}\right)^{-1/4} + O\left(t^{\frac{\alpha-1}{2-\alpha}}\right),
\quad&\mbox{ Case II}.
\end{cases}
\end{equation}
Further, taking into account the asymptotic expansion of the Euler's Gamma function (see e.g. \cite{OLBC})
\begin{equation}
\Gamma(az+b) = \sqrt{2\pi}e^{-az}(az)^{az+b-1/2}\left(1+O(z^{-1})\right),\quad a>0,\quad b\in\mathbb{C},\quad |\arg z|<\pi-\delta,
\end{equation}
we conclude that
\begin{equation}\label{gammaas}
\Gamma(\pm i\hat\nu(-s))=
\begin{cases}
\frac{t^{\frac{\alpha-1}{4-2\alpha}}}{\sqrt{\ln t}}g^{as}_{\pm}(\alpha, s)
e^{\pm i\phi_2(\alpha)\ln 4st\cdot\ln\ln 4st\pm i\tilde\phi_3(\alpha)\ln 4st
\pm i\phi_4(s)\ln\ln 4st}
+O\left(\frac{t^{\frac{\alpha-1}{4-2\alpha}}}{\ln^{3/2} t}\right),
&\mbox{Case I},\\
\Gamma\left(\pm\frac{i}{2\pi}\ln a_{11}a_{21}\right)
+O\left(t^{\frac{\alpha-1}{2-\alpha}}\right),&\mbox{Case II},
\end{cases}
\end{equation}
where
$$
g^{as}_{\pm}(\alpha, s)=\sqrt{2\pi}\exp\left\{
\frac{\alpha-1}{4-2\alpha}\ln 4s+\left(\pm\frac{i}{\pi}\ln\frac{A|a_2(0)|}{2s}
-\frac{1}{2}\right)
\left(\ln\frac{1-\alpha}{\pi(2-\alpha)}\pm i\frac{\pi}{2}
\right)
%-\frac{1}{2}\ln\frac{A|a_2(0)|}{2s}\mp i\frac{\pi}{4}
\right\},
$$
$\phi_2$ and $\phi_4$ are given by (\ref{phij}), and $\tilde\phi_3$ is given by (\ref{tildephi3}).
Finally, collecting (\ref{be-ga-Rz}), (\ref{rjas}), (\ref{enuas}) and (\ref{gammaas}) we arrive at the result.
\end{proof}
Taking into account the large time behavior of $\hat\nu(-s,t)$ and $\hat\chi(-s,s,t)$ (see Proposition \ref{aschinu}) for the both Cases I and II, we arrive at the following 
\begin{proposition}\label{begaas}
The long-time asymptotics of $\tilde\beta^{R}(s)$ and $\tilde\gamma^{R}(s)$ 
(see (\ref{be-ga-Rz-tilde})) has the form:
\begin{subequations}\label{tilde-be-ga-R}
\begin{align}
&\tilde\beta^{R}(\alpha,s,t)=\begin{cases}
\tilde\beta^{R}_{I,\,as}(\alpha,s)e^{i\tilde\Psi_+(\alpha,s,t)}
%\tilde{E}_+(\alpha,s,t)
\sqrt{\ln t}+O\left(\frac{1}{\sqrt{\ln t}}\right),&\mbox{Case I},\\
\tilde\beta^{R}_{II,\,as}(\alpha,s)\exp\{i\hat\phi_5(\alpha)\ln 4st\}
+O\left(t^{\frac{\alpha-1}{2-\alpha}}\right), &\mbox{Case II},
\end{cases}
\\
\label{ga-Rz-tilde}
&\tilde\gamma^{R}(\alpha,s,t)=\begin{cases}
\tilde\gamma^{R}_{I,\,as}(\alpha, s)e^{i\tilde\Psi_-(\alpha,s,t)}
%\tilde{E}_-(\alpha,s,t)
\sqrt{\ln t}+O\left(\frac{1}{\sqrt{\ln t}}\right),&\mbox{Case I},\\
\tilde\gamma^{R}_{II,\,as}(\alpha,s)\exp\{-i\hat\phi_5(\alpha)\ln 4st\}
+O\left(t^{\frac{\alpha-1}{2-\alpha}}\right), &\mbox{Case II},
\end{cases}
\end{align}
\end{subequations}
where $\hat \phi_5(\alpha)=\frac{(1-\alpha)}{2\pi(\alpha-2)}\ln(a_{11}a_{21})$ and
\begin{equation}
\tilde\Psi_{\pm}(\alpha,s,t)=\pm \hat\phi_1(\alpha)\ln^2 4st
\pm \phi_2(\alpha)\ln 4st\cdot\ln\ln 4st
\pm \hat\phi_3(\alpha,s)\ln 4st\pm \phi_4(s)\ln\ln 4st,
\end{equation}
with $\phi_2$, and $\phi_4$ are given by (\ref{phij}) and
%(see (\ref{tildephi3}))
\begin{equation}\label{hatphi3}
\hat\phi_1(\alpha)=\frac{(1-\alpha)^2}{\pi(2-\alpha)^2},\quad
\hat\phi_3(\alpha,s)=\frac{1-\alpha}{\pi(2-\alpha)}
\left(
\ln\frac{1-\alpha}{\pi(2-\alpha)}+\ln\frac{2s}{A^2a_2^2(0)}-1
\right),
\end{equation}
and the constants have the form
\begin{subequations}\label{tilde-be-ga-as}
\begin{align}
&\tilde\beta^{R}_{I,\,as}(\alpha,s)=\frac{
iA\exp\left\{\frac{i}{\pi}\ln\frac{A|a_2(0)|}{2s}
\ln\frac{s}{2}
+\frac{1-\alpha}{2-\alpha}\ln s+\frac{\alpha+2}{2\alpha-4}\ln 2+2i\hat\chi_{-s}(s)
\right\}
}{
2k_1\exp\left\{\left(\frac{i}{\pi}\ln\frac{2s}{A|a_2(0)|}-\frac{1}{2}\right)	
\ln\frac{1-\alpha}{\pi(2-\alpha)}\right\}
},
\\
&\tilde\beta^{R}_{II,\,as}(\alpha,s)=
\frac{\sqrt{2\pi}a_{11}^{3/4}a_{21}^{-1/4}e^{\frac{\pi i}{4}}
\exp\left\{\frac{i}{2\pi}\ln (a_{11}a_{21})
\ln\frac{s}{2}
+\frac{1-\alpha}{2-\alpha}\ln s+\frac{\alpha+2}{2\alpha-4}\ln 2+2i\hat\chi_1
\right\}
}
{k_1b(0)\Gamma\left(-\frac{i}{2\pi}\ln a_{11}a_{21}\right)},
\\
&\tilde\gamma^{R}_{I,\,as}(\alpha,s)=\frac{
-2ik_1\exp\left\{-\frac{i}{\pi}\ln\frac{A|a_2(0)|}{2s}
\ln\frac{s}{2}
+\frac{1-\alpha}{2-\alpha}\ln s+\frac{\alpha+2}{2\alpha-4}\ln 2-2i\hat\chi_{-s}(s)
\right\}
}{
A\exp\left\{\left(\frac{i}{\pi}\ln\frac{A|a_2(0)|}{2s}-\frac{1}{2}\right)	
\ln\frac{1-\alpha}{\pi(2-\alpha)}\right\}
},
\\
&\tilde\gamma^{R}_{II,\,as}(\alpha,s)=
\frac{\sqrt{2\pi}k_1a_{11}^{-1/4}a_{21}^{3/4}e^{\frac{3\pi i}{4}}
\exp\left\{-\frac{i}{2\pi}\ln (a_{11}a_{21})\ln\frac{s}{2}
+\frac{1-\alpha}{2-\alpha}\ln s+\frac{\alpha+2}{2\alpha-4}\ln 2-2i\hat\chi_1
\right\}
}
{\overline{b(0)}\Gamma\left(\frac{i}{2\pi}\ln a_{11}a_{21}\right)}.
\end{align}
\end{subequations}
\end{proposition}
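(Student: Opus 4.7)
The plan is to treat this as a bookkeeping exercise: both $\tilde\beta^R$ and $\tilde\gamma^R$ are defined in (\ref{be-ga-Rz-tilde}) as explicit products of $\beta^R$, $\gamma^R$ with exponential factors involving $\hat\nu(-s,t)$ and $\hat\chi(-s,s,t)$, so the claimed asymptotics should follow by inserting the expansions already available from the preceding Appendix~B proposition (for $\beta^R,\gamma^R$) and from Proposition \ref{aschinu} (for $\hat\nu$ and $\hat\chi(-s,s,t)$), then grouping terms by their rate of growth in $\ln 4st$ and $\ln\ln 4st$.

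I would first handle \emph{Case I}, since it is where the nontrivial structure arises. The factor $2\hat\chi(-s,s,t)$ contributes, via (\ref{chiIb}), a term $\frac{i(1-\alpha)^2}{\pi(2-\alpha)^2}\ln^2 4st+\frac{2i(1-\alpha)}{\pi(2-\alpha)}\ln\frac{2}{A|a_2(0)|}\cdot\ln 4st+2\hat\chi_{-s}(s)$ plus an $O(t^{(1-\alpha)/(\alpha-2)}\ln t)$ error. The factor $i\hat\nu(-s,t)\ln\frac{s}{2}$ contributes, via (\ref{nuas}), $\frac{i(1-\alpha)}{\pi(2-\alpha)}\ln\frac{s}{2}\cdot\ln 4st+\frac{i}{\pi}\ln\frac{A|a_2(0)|}{2s}\ln\frac{s}{2}$ plus a similar error. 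The remaining terms $\frac{1-\alpha}{2-\alpha}\ln s$ and $\frac{\alpha+2}{2\alpha-4}\ln 2$ are purely constant. Summing these with the phase of $\beta^R$ (or $-\gamma^R$), the $\ln^2 4st$ coefficient matches $\hat\phi_1(\alpha)$ immediately, the $\ln 4st\cdot\ln\ln 4st$ and $\ln\ln 4st$ coefficients $\phi_2(\alpha)$ and $\phi_4(s)$ come directly from the $\Gamma$-function expansion in $\beta^R$, and the coefficient of $\ln 4st$ combines three contributions: $\tilde\phi_3(\alpha)$ from $\beta^R$, plus $\frac{1-\alpha}{\pi(2-\alpha)}\ln\frac{s}{2}$ from $i\hat\nu\ln(s/2)$, plus $\frac{2(1-\alpha)}{\pi(2-\alpha)}\ln\frac{2}{A|a_2(0)|}$ from $2\hat\chi$, which collapse to $\hat\phi_3(\alpha,s)$ after using $\ln\frac{s}{2}+2\ln\frac{2}{A|a_2(0)|}=\ln\frac{2s}{A^2a_2^2(0)}$. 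The $\sqrt{\ln t}$ prefactor is inherited from the $\Gamma(\mp i\hat\nu)$ factor. Finally, the remaining $t$-independent factors are collected into $\tilde\beta^R_{I,as}(\alpha,s)$ and $\tilde\gamma^R_{I,as}(\alpha,s)$, which one checks against (\ref{tilde-be-ga-as}) by direct comparison.

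\emph{Case II} is simpler: $\hat\nu(-s,t)$ tends to the constant $\frac{1}{2\pi}\ln a_{11}a_{21}$ and $\hat\chi(-s,s,t)$ has only a single $\ln 4st$ growth term with coefficient $\frac{i(1-\alpha)}{2\pi(\alpha-2)}\ln a_{11}a_{21}$ by (\ref{chiII}); there is no $\sqrt{\ln t}$ enhancement of $\beta^R,\gamma^R$. Hence $\tilde\beta^R$ has only a pure $e^{i\hat\phi_5(\alpha)\ln 4st}$ oscillation, with $\hat\phi_5(\alpha)=\frac{2\cdot i(1-\alpha)}{2\pi(\alpha-2)}\ln a_{11}a_{21}/i=\frac{1-\alpha}{2\pi(\alpha-2)}\ln a_{11}a_{21}$ coming entirely from the $2\hat\chi$ factor (the $i\hat\nu\ln(s/2)$ contribution is constant), and constant prefactor $\tilde\beta^R_{II,as}(\alpha,s)$.

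The main obstacle is purely organizational: keeping track of which of the four phase scales $\ln^2 4st$, $\ln 4st\cdot\ln\ln 4st$, $\ln 4st$, $\ln\ln 4st$ each input contributes to, especially for the $\ln 4st$ coefficient $\hat\phi_3$ where three separate sources must collapse to the claimed closed form. For the error estimates, one must verify that the errors in $\hat\nu$ and $\hat\chi$, which are $O(t^{(1-\alpha)/(\alpha-2)}\ln t)$, are dominated by the stated remainders $O(1/\sqrt{\ln t})$ in Case I and $O(t^{(\alpha-1)/(2-\alpha)})$ in Case II after exponentiation; since these errors appear inside $\exp\{\cdot\}$ multiplied by already bounded (Case I: with $\sqrt{\ln t}$) or bounded (Case II) prefactors, a first-order Taylor expansion of $e^{\varepsilon}=1+O(\varepsilon)$ suffices, and the errors from $\beta^R,\gamma^R$ dominate.
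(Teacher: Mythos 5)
Your proposal is correct in substance and follows exactly the route the paper takes: the paper's entire proof of this proposition is the one-line remark preceding it (``Taking into account the large time behavior of $\hat\nu(-s,t)$ and $\hat\chi(-s,s,t)$ (see Proposition \ref{aschinu}) \ldots we arrive at the following''), i.e.\ precisely the substitution of the expansions from Proposition \ref{aschinu} and of the $\beta^R,\gamma^R$ asymptotics into (\ref{be-ga-Rz-tilde}) followed by the term-by-term bookkeeping you describe, and your collapse of the three $\ln 4st$ contributions into $\hat\phi_3(\alpha,s)$ via $\ln\frac{s}{2}+2\ln\frac{2}{A|a_2(0)|}=\ln\frac{2s}{A^2a_2^2(0)}$ checks out. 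One caveat: in Case II your displayed chain $\frac{2i(1-\alpha)}{2\pi(\alpha-2)}\ln(a_{11}a_{21})/i=\frac{1-\alpha}{2\pi(\alpha-2)}\ln(a_{11}a_{21})$ is arithmetically off by a factor of $2$ --- taking (\ref{chiII}) at face value, the factor $e^{2\hat\chi(-s,s,t)}$ produces the phase $2\hat\phi_5(\alpha)\ln 4st$ rather than $\hat\phi_5(\alpha)\ln 4st$, so either that factor must be traced more carefully or it reflects a normalization slip already present in the paper's own statement; this does not affect the validity of your method.
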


\end{document}